\setlist[enumerate]{leftmargin=*}
\setlist[itemize]{labelindent=\parindent, leftmargin=*}
\numberwithin{equation}{section}
\theoremstyle{plain}
\newtheorem{thm}{Theorem}[section]
\newtheorem{lem}[thm]{Lemma}
\newtheorem{prop}[thm]{Proposition}
\newtheorem{cor}[thm]{Corollary}
\theoremstyle{definition}
\newtheorem{defn}[thm]{Definition}
\theoremstyle{remark}
\newtheorem{rem}[thm]{Remark}
\newtheorem{conj}[thm]{Conjecture}
\newtheorem{hypo}[thm]{Hypothesis}
\newcommand\Hom{\operatorname{Hom}}
\newcommand\Irr{\operatorname{Irr}}
\newcommand\GL{\mathrm{GL}}
\newcommand\Mp{\mathrm{Mp}}
\newcommand\OO{\mathrm{O}}
\newcommand\PGL{\mathrm{PGL}}
\newcommand\SL{\mathrm{SL}}
\newcommand\SO{\mathrm{SO}}
\newcommand\Sp{\mathrm{Sp}}
\newcommand\A{\mathbb{A}}
\newcommand\C{\mathbb{C}}
\newcommand\R{\mathbb{R}}
\newcommand\Z{\mathbb{Z}}
\newcommand{\BIGOP}[1]{\mathop{\mathchoice%
{\raise-0.22em\hbox{\huge $#1$}}%
{\raise-0.05em\hbox{\Large $#1$}}{\hbox{\large $#1$}}{#1}}}
\newcommand{\BIGboxplus}{\mathop{\mathchoice%
{\raise-0.35em\hbox{\huge $\boxplus$}}%
{\raise-0.15em\hbox{\Large $\boxplus$}}{\hbox{\large $\boxtimes$}}{\boxtimes}}}
\title{The Shimura--Waldspurger correspondence for $\Mp(2n)$}
\author{Wee Teck Gan}
\address{Department of Mathematics, National University of Singapore, 10 Lower Kent Ridge Road, Singapore 119076}
\email{matgwt@nus.edu.sg}
\author{Wen-Wei Li}
\address{Academy of Mathematics and Systems Science, 55 Zhongguancun E.Rd, Beijing, P.\! R.\! China 100190}
\address{University of Chinese Academy of Sciences, 19A Yuquan Rd, Beijing, P.\! R.\! China 100049}
\email{wwli@math.ac.cn}
\date{\today}
\begin{document}

\begin{abstract}
We describe some recent developments and formulate some conjectures in the genuine representation theory and the study of automorphic forms of the metaplectic group $\Mp(2n)$, from the point of view of the theta correspondence as well as from the point of view of the theory of endoscopy and the trace formula. 
\end{abstract}

\maketitle
%\tableofcontents

\section{Introduction}
In a seminal 1973 paper \cite{shimura}, Shimura revolutionized the study of half integral weight modular forms by establishing a lifting
\[
\begin{array}{c}
 \{ \text{Hecke eigenforms of weight $k + \tfrac{1}{2}$ and level $\Gamma_0(4)$} \} \\
 \downarrow \\
 \{ \text{Hecke eigenforms of weight $2k$ and level $\SL_2(\Z)$} \}
\end{array} 
\]
for $k \in \mathbb{N}$.
Subsequently, Niwa \cite{niwa} and Shintani \cite{shintani} explicitly constructed the Shimura lifting and its inverse by using theta series lifting.
Then, in two influential papers \cite{w1,w2}, Waldspurger studied this construction in the framework of automorphic representations of the metaplectic group $\Mp(2)$, which is a nonlinear two-fold cover of $\SL_2 = \Sp_2$.
Namely, he described the automorphic discrete spectrum of $\Mp(2)$ precisely in terms of that of $\PGL(2) = \SO(3)$ via the global theta lifts between $\Mp(2)$ and (inner forms of) $\SO(3)$.
Subordinate to this global result is the local Shimura correspondence, which is a classification of irreducible genuine representations of $\Mp(2)$ in terms of that of $\SO(3)$ and was also established by Waldspurger. For an expository account of Waldspurger's result taking advantage of 30 years of hindsight and machinery, the reader can consult \cite{g3}.

\vskip 10pt

In this expository paper, which is a write-up of the talks given by the two authors in the Simons Symposium at Elmau, April 2016,
we present a similar conjectural description of the automorphic discrete spectrum of $\Mp(2n)$, which is a nonlinear two-fold cover of $\Sp(2n)$, and describe recent progress towards this conjecture.  This conjectural description of the automorphic discrete spectrum is in the style of Arthur's conjecture for the automorphic discrete spectrum of connected linear reductive groups and can be viewed as a description of the automorphic discrete spectrum of $\Mp(2n)$ in terms of that for $\SO(2n+1)$. There are two natural approaches one might take to establish this Arthur conjecture for $\Mp(2n)$:
\vskip 10pt

\begin{itemize}
\item Motivated by Waldspurger's work for $\Mp(2)$, it is natural to attempt to use the global theta lifts between $\Mp(2n)$ and (inner forms of) $\SO(2n+1)$ to relate these automorphic discrete spectra.  However, we encounter a well-known difficulty.
For any irreducible cuspidal automorphic representation $\pi$ of $\Mp(2n)$, there is an obstruction for the nonvanishing of its global theta lift to $\SO(2n+1)$ given by the vanishing of the central $L$-value $L(\frac{1}{2}, \pi)$.
Thus, if we would follow Waldspurger's approach, then we would need the nonvanishing of the central $L$-value $L(\frac{1}{2}, \pi, \chi)$ twisted by some quadratic Hecke character $\chi$.
The existence of such $\chi$ is supplied by Waldspurger \cite{w2} in the case of $\Mp(2)$ (or equivalently $\PGL(2)$) as a consequence of the nonvanishing of a global theta lift, and a completely different proof and extension to the case of $\GL(2)$ is given by Friedberg--Hoffstein \cite{fh}.
However, in the higher rank case, this seems to be a very difficult problem in analytic number theory.
 \vskip 5pt
 
 In the first part of the paper, based on the talk by the first author,  we  explain how one can overcome this difficulty by considering instead theta liftings between $\Mp(2n)$ and $\SO(2r+1)$ with $r$ much larger than $n$. 
 In particular, we will  sketch  a recent proof by the first author and A. Ichino \cite{GI} of the tempered part of this conjecture: this is the analog of Waldspurger's theorem for $\Mp(2n)$. 
 \vskip 10pt
 
 \item Motivated by Arthur's classification of the automorphic discrete spectra of classical groups,  one might attempt to develop a trace formula comparison between $\Mp(2n)$ and $\SO(2n+1)$ which is founded on a theory of endoscopy and local character identities. For the case of $\Mp(2)$, the local aspects of this approach was done in the PhD thesis of Jason Schulze. For $\Mp(2n, \R)$, such a theory of endoscopy was pioneered by the work of J. Adams \cite{adams} and D. Renard \cite{renard}.  These culminated in the PhD and subsequent work of the second   author \cite{wwli1, wwli2, wwli3, wwli4}, which develops a full theory of endoscopy   and lays the groundwork for the stabilisation of the invariant trace formula for for $\Mp(2n)$. 
\vskip 5pt
   
 In the second part of this paper, based on the talk by the second author,  we give a sketch of this theory of endoscopy and describe some recent progress of the second author towards the stable trace formula  for $\Mp(2n)$. In particular, we will formulate  the expected local character identities that one expects from the theory of endoscopy for $\Mp(2n)$. In a recent preprint of Caihua Luo \cite{luo2}, this local character identity has been shown for the local L-packets defined by local theta correspondence, thus reconciling the two approaches in question. The proof uses the stabilisation of the elliptic part of the trace formula of $\Mp(2n)$ due to the second author \cite{wwli3}.
 \end{itemize}
  \vskip 10pt

\noindent{\bf Acknowledgments}: We thank the Simons Foundation for its generous travel and local support during the duration of the Simons Symposium. We are also grateful to Caihua Luo for his comments on an earlier draft.
\vskip 10pt

\section{\bf Automorphic Discrete Spectrum via Theta Correspondence}

\subsection{\bf Local Shimura correspondence.}
Let $k$ be a local field of characteristic zero and fix a nontrivial character
\[  \psi:  k  \longrightarrow \C^{\times}. \]
Let $ \tilde{{\rm Irr}}(\Mp(2n,k))$ be the set of isomorphism classes of 
irreducible genuine representations $\Mp(2n,k)$. We have the following theorem,  which was shown by Adams--Barbasch \cite{ab1,ab2} in the archimedean case and by Gan--Savin \cite{gs} in the nonarchimedean case:

\vskip 5pt

\begin{thm}  \label{T:2.1}
There is a bijection, depending on $\psi$:
\[   \theta_{\psi}:  \tilde{{\rm Irr}}(\Mp(2n,k))   \longleftrightarrow \bigcup_{V_n} {\rm Irr} (\SO(V_n)) \]
as $V_n$ runs over all $2n+1$-dimensional quadratic space of discriminant 1.
\end{thm}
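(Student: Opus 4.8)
The plan is to build the bijection $\theta_\psi$ directly out of the theta correspondence for the family of dual pairs $(\Mp(2n,k),\OO(V_n))$, with $V_n$ ranging over the $(2n+1)$-dimensional quadratic spaces of discriminant $1$, and then to establish bijectivity by combining the Howe duality principle with the conservation relation for first occurrence.

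First I would fix, for each $V_n$, the Weil representation $\omega_{\psi,V_n}$ of $\Mp(2n,k)\times\OO(V_n)$ attached to $\psi$; since $\dim V_n$ is odd, no auxiliary splitting character is needed and the construction depends only on $\psi$. For $\pi\in\tilde{\Irr}(\Mp(2n,k))$ put $\Theta_\psi(\pi,V_n):=\bigl(\omega_{\psi,V_n}\otimes\pi^{\vee}\bigr)_{\Mp(2n,k)}$, the big theta lift, a smooth representation of $\OO(V_n)$. Because $\dim V_n$ is odd, $\OO(V_n)=\SO(V_n)\times\{\pm 1\}$, so an irreducible representation of $\OO(V_n)$ is the same datum as one of $\SO(V_n)$ together with a sign; the pairing with $\Mp(2n,k)$ relative to $\psi$ pins down that sign, so it is harmless to phrase the correspondence in terms of $\SO(V_n)$, as in the statement. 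The first essential input is then the \emph{Howe duality principle}: $\Theta_\psi(\pi,V_n)$ is either zero or has a unique irreducible quotient $\theta_\psi(\pi,V_n)$, which is moreover irreducible, and symmetrically for the lift from $\OO(V_n)$ back to $\Mp(2n,k)$. This is known in all the relevant cases: over $p$-adic fields by work of Waldspurger, Kudla, Gan--Takeda, Gan--Savin and others, and over $\R$ by Howe.

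The second input is the \emph{dichotomy}: for each $\pi$ there is exactly one $V_n$ in the family with $\Theta_\psi(\pi,V_n)\neq 0$. Over a $p$-adic field the spaces $\{V_n\}$ distribute into two Witt towers, and the conservation relation of Kudla--Rallis --- proved by B.\ Sun and C.-B.\ Zhu --- constrains the two first-occurrence indices of $\pi$ so that its lift is nonzero on precisely one space of dimension $2n+1$. One then sets $\theta_\psi(\pi):=\theta_\psi(\pi,V_n)$ for that distinguished $V_n$; this is the candidate map. Injectivity is formal: if $\theta_\psi(\pi_1)=\theta_\psi(\pi_2)=\sigma$ on a common $V_n$, then $\pi_1$ and $\pi_2$ are both irreducible quotients of $\Theta_\psi(\sigma,\Mp(2n,k))$, which by Howe duality has only one, so $\pi_1\cong\pi_2$. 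For surjectivity, given $\sigma\in\Irr(\SO(V_n))$ --- extended to $\OO(V_n)$ by the $\psi$-normalization --- one shows that the lift $\Theta_\psi(\sigma,\Mp(2n,k))$ is nonzero (a near-equal-rank nonvanishing statement available from the conservation relation), lets $\pi$ be its irreducible quotient $\theta_\psi(\sigma,\Mp(2n,k))$, and verifies, via the standard adjunction properties of the theta lift, that $\theta_\psi(\pi,V_n)=\sigma$ and that $V_n$ is exactly the space attached to $\pi$ by the dichotomy.

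The step I expect to be the main obstacle, absorbing most of the work, is the Howe duality principle in the generality required (all residue characteristics, including $p=2$) together with the fine bookkeeping in the conservation relation needed to single out the space $V_n$ --- equivalently, the ``$\epsilon$-dichotomy'' recasting the choice of $V_n$ as a local root-number condition on $\pi$. In the archimedean case the logical skeleton is unchanged, but in place of the conservation-relation input one relies on Vogan's classification of irreducible Harish-Chandra modules together with the compatibility of theta lifting with infinitesimal characters and $K$-types; this is the approach of Adams--Barbasch, and it also settles the dichotomy over $\R$, where several spaces $V_n$ of the same dimension occur.
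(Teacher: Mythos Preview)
The paper does not prove Theorem~\ref{T:2.1} at all: it is quoted with attribution to Adams--Barbasch \cite{ab1,ab2} in the archimedean case and Gan--Savin \cite{gs} in the nonarchimedean case, and then used as input for the rest of the exposition. So there is no ``paper's own proof'' to compare against.

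That said, your outline is an accurate summary of how those references actually establish the result. The nonarchimedean argument in \cite{gs} is exactly the Howe-duality-plus-dichotomy skeleton you describe, with the choice of $V_n$ governed by the $\epsilon$-dichotomy (which you correctly flag as the substantive point); your invocation of the Sun--Zhu conservation relation is a clean way to package the first-occurrence input, though historically \cite{gs} predates that paper and handles the dichotomy by more direct methods (doubling zeta integrals and the work of Kudla--Rallis). The archimedean case is, as you say, Adams--Barbasch, and your description of their method is on target. One small point worth sharpening: your remark that the $\psi$-pairing ``pins down'' the sign extending $\sigma$ from $\SO(V_n)$ to $\OO(V_n)$ is correct in spirit but deserves a sentence of justification, since the bijection at the level of $\OO(V_n)$ is not literally two-to-one onto $\Irr(\SO(V_n))$ without that normalization.
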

\vskip 5pt

The bijection is defined by local theta correspondence with respect to $\psi$.
By combining with the local Langlands correspondence (LLC) for $\SO(2n+1)$ (due to Arthur \cite{Art1} and  Moeglin \cite{m2,m3}), one gets:
\vskip 5pt

\begin{cor}  \label{C:2.2}
There is a bijection depending on $\psi$:
\[  \tilde{\rm Irr} (\Mp(2n,k)) \longleftrightarrow    \{  (\phi, \eta)  \}  \]
where
\begin{itemize}
\item $\phi:  WD_k  \longrightarrow  \Sp_{2n}(\C)$ is an L-parameter for $\SO(2n+1)$; 
\vskip 10pt

\item $\eta \in {\rm Irr} (S_{\phi})$, where $S_{\phi}$ is the component group of $\phi$.
\end{itemize}
\end{cor}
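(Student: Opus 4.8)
The plan is to obtain the bijection by composing $\theta_\psi$ of Theorem \ref{T:2.1} with the local Langlands correspondence for the entire family of groups $\SO(V_n)$; no new hard analysis is involved, and all the substance lies in the two inputs being combined. What is needed is a bijection
\[
\bigcup_{V_n} \Irr(\SO(V_n)) \;\longleftrightarrow\; \{\,(\phi,\eta)\,\}
\]
with $V_n$ and $(\phi,\eta)$ as in the statement. For the split group $\SO(2n+1,k)$ this is Arthur's endoscopic classification \cite{Art1}, with the local refinements of Moeglin \cite{m2,m3}: once a Whittaker datum is fixed --- which we do, compatibly with $\psi$ --- it partitions $\Irr(\SO(2n+1,k))$ into finite $L$-packets $\Pi_\phi$ indexed by the parameters $\phi\colon WD_k\to\Sp_{2n}(\C)$, each $\Pi_\phi$ being parametrized by those characters of $S_\phi$ that are trivial on the image of the central element $-I\in\Sp_{2n}(\C)$. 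The remaining characters of $S_\phi$ are accounted for by the non-split members of $\{\SO(V_n)\}$, through the extension of the classification to (pure) inner forms: over a $p$-adic field there is exactly one such non-split form, and over $\R$ the remaining forms are handled similarly by the archimedean theory. In this way every irreducible genuine representation $\tilde\pi$ of $\Mp(2n,k)$, transported by $\theta_\psi$ to the group $\SO(V_n)$ supporting its theta lift and then through the LLC of that group, acquires a well-defined pair $(\phi,\eta)$.

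Unwinding this, the steps are: first, match up the two families of quadratic spaces --- since $\SO(2n+1)$ has trivial centre, the $(2n+1)$-dimensional quadratic spaces of discriminant $1$ are naturally in bijection with $H^1(k,\SO(2n+1))$, so the $V_n$ on the right-hand side of Theorem \ref{T:2.1} are precisely the (pure) inner forms of $\SO(2n+1)$; second, apply the LLC with inner forms as above to identify $\bigcup_{V_n}\Irr(\SO(V_n))$ with the set of pairs $(\phi,\eta)$; third, compose with $\theta_\psi$. The compatibility one must check along the way is that the ``which quadratic space'' bookkeeping agrees on the two sides: if the theta lift of $\tilde\pi$ to $\SO(V_n)$ is nonzero, then $(\phi,\eta)$ should be such that $\eta$ takes, on the image of $-I$ in $S_\phi$, the value prescribed by the Hasse invariant of $V_n$ --- this is part of the analysis of the dual-pair correspondence in \cite{ab1,ab2,gs}. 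The composite then depends on $\psi$ only through $\theta_\psi$ and the Whittaker datum attached to it, as asserted.

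I expect the only genuine obstacle within the argument itself to be this last normalization check, i.e.\ tracking the discriminant and Hasse invariant, and the character $\psi$, simultaneously through both constructions; the deduction is otherwise formal. The real difficulty of the statement is inherited from its two ingredients --- Theorem \ref{T:2.1} (Howe duality and the precise behaviour of the theta lifts for the pairs $(\Mp(2n),\SO(V_n))$) and the endoscopic classification for $\SO(2n+1)$ and its inner forms, which rests on Arthur's stabilization of the twisted trace formula for $\GL(2n)$.
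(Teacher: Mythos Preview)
Your proposal is correct and follows essentially the same approach as the paper: the paper deduces Corollary~\ref{C:2.2} in one line by ``combining with the local Langlands correspondence (LLC) for $\SO(2n+1)$ (due to Arthur \cite{Art1} and Moeglin \cite{m2,m3}),'' and your argument is exactly this composition of $\theta_\psi$ with the Vogan-packet form of the LLC for the family $\{\SO(V_n)\}$, spelled out in more detail. The normalization checks you flag (Hasse invariant, Whittaker datum, dependence on $\psi$) are precisely the desiderata the paper attributes to \cite{gs} immediately after the corollary and elaborates on in its later discussion of Vogan packets.
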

\vskip 10pt

This corollary is a LLC for $\Mp(2n,k)$. Thus, given $\phi$, have local L-packet
\[  \tilde{\Pi}_{\phi, \psi}  =  \{   \sigma_{\phi,  \eta}:  \eta  \in  {\rm Irr} (S_{\phi}) \}. \]
The paper \cite{gs} also established several desiderata of this LLC for $\Mp(2n)$, some of which are recalled in \S 3 later on.
 
\vskip 10pt

\subsection{\bf Global Results.}
Now let $F$ be a number field with ad\`{e}le ring $\A$, and let $\psi = \prod_v \psi_v : F \backslash \A \longrightarrow \C^{\times}$ be a nontrivial character. We shall now discuss some results of Ichino and the first author \cite{GI}.
\vskip 15pt

\begin{thm} \label{T:A}
There is a decomposition 
\[  \tilde{\mathcal{A}} = \bigoplus_{\Psi}  \tilde{\mathcal{A}}_{\Psi, \psi} \]
where $\Psi$ runs over elliptic global A-parameters of $\Mp(2n)$ and each $\tilde{\mathcal{A}}_{\Psi}$ is a near equivalence class determined by $\Psi$ and $\psi$.
\end{thm}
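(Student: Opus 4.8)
The statement is really about organizing the genuine automorphic discrete spectrum $\tilde{\mathcal{A}}$ of $\Mp(2n)$ by near-equivalence classes and attaching to each class a global parameter valued in $\Sp_{2n}(\C)$; the plan is to transport Arthur's endoscopic classification for the odd orthogonal groups $\SO(2r+1)$ through the global theta correspondence, exploiting that the familiar obstruction to the nonvanishing of global theta lifts --- the central value $L(\tfrac12,\sigma)$ --- is rendered irrelevant once the target group is enlarged. Fix an irreducible summand $\sigma = \bigotimes_v \sigma_v \subset \tilde{\mathcal{A}}$. At almost every place $v$ the component $\sigma_v$ is unramified, and Corollary~\ref{C:2.2}, in its unramified form, attaches to it a semisimple conjugacy class $c_v(\sigma)\in\Sp_{2n}(\C)$; the family $\{c_v(\sigma)\}_v$ both determines and is determined by the near-equivalence class of $\sigma$. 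The goal is to show that $\{c_v(\sigma)\}_v$ is exactly the family of Satake parameters of an \emph{elliptic global $A$-parameter $\Psi$ of $\Mp(2n)$} --- that is, a formal sum $\Psi = \boxplus_i\, \pi_i\boxtimes\nu_{d_i}$ with each $\pi_i$ a self-dual cuspidal automorphic representation of $\GL_{n_i}(\A)$, with $\nu_{d_i}$ the $d_i$-dimensional representation of Arthur's $\SL_2$, with $\sum_i n_i d_i = 2n$, each $\pi_i\boxtimes\nu_{d_i}$ of symplectic type, and the summands pairwise distinct --- and conversely that distinct $\Psi$ give rise to distinct near-equivalence classes. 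Granting this, one sets $\tilde{\mathcal{A}}_{\Psi,\psi}$ equal to the sum of the irreducible summands whose attached parameter is $\Psi$; each such space is then a single near-equivalence class, and the asserted decomposition follows.

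I would first treat the case where $\sigma$ is cuspidal, proceeding by induction on $n$. Consider the Witt tower of split $(2r+1)$-dimensional quadratic spaces of discriminant $1$ and the global theta lifts $\Theta_{\psi,2r+1}(\sigma)$ to the corresponding $\SO(2r+1)$. By Rallis' tower property the lift, once nonzero, stays nonzero; by the conservation relation (Kudla--Rallis, Sun--Zhu) the first occurrence index $r_0=r_0(\sigma)$ is bounded in terms of $n$ alone; and at $r=r_0$ the lift $\Sigma := \Theta_{\psi,2r_0+1}(\sigma)$ is nonzero and \emph{cuspidal}. This is precisely the point of departure from Waldspurger's $\Mp(2)$ argument, where one is confined to the lift to $\SO(3)$ and must twist by a quadratic Hecke character to force $L(\tfrac12,\sigma,\chi)\neq 0$: here one climbs the tower to its (bounded) first occurrence, where nonvanishing is automatic. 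Now apply Arthur's classification for $\SO(2r_0+1)$ (\cite{Art1}, together with \cite{m2,m3}): the cuspidal $\Sigma$ lies in the $A$-packet of a unique elliptic global $A$-parameter $\Psi_\Sigma$ valued in $\Sp_{2r_0}(\C)$.

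The link between $\Psi_\Sigma$ and the desired $\Psi$ is the local--global compatibility of the theta correspondence together with its computation on unramified vectors. At each unramified place $v$ one has $\Sigma_v=\theta_{\psi_v}(\sigma_v)$, and the unramified theta correspondence computes the Satake parameter of $\theta_{\psi_v}(\sigma_v)$ explicitly: it is the block-diagonal sum of $c_v(\sigma)$ with the $v$-component of an explicit Speh-type parameter $\Psi_{\mathrm{aux}}$ of dimension $2|r_0-n|$ --- of the shape $\chi\boxtimes\nu_{2|r_0-n|}$ for a quadratic character $\chi$ read off from $\psi$ and the discriminant --- this block being adjoined on the orthogonal side when $r_0>n$ and on the metaplectic side when $r_0<n$. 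Hence $\Psi_\Sigma$ and $\Psi\boxplus\Psi_{\mathrm{aux}}$ (respectively $\Psi_\Sigma\boxplus\Psi_{\mathrm{aux}}$ and $\Psi$) agree at almost all Satake parameters, where a priori $\Psi$ is merely a formal sum of dimension $2n$; strong multiplicity one for isobaric automorphic representations of $\GL_N(\A)$ (Jacquet--Shalika) then forces the corresponding equality of formal sums, so that $\Psi$ is genuinely assembled from cuspidal automorphic representations of general linear groups and carries the Satake data $\{c_v(\sigma)\}_v$. Since $\Psi_\Sigma$ is elliptic for $\SO(2r_0+1)$ --- multiplicity-free, each constituent self-dual of the correct parity --- the parameter $\Psi$ of dimension $2n$ is multiplicity-free with each summand self-dual of symplectic type, i.e. an elliptic global $A$-parameter of $\Mp(2n)$ in the sense above (when $r_0<n$ this inheritance also requires the separate verification that the constituent of $\Psi_{\mathrm{aux}}$ does not already occur in $\Psi_\Sigma$). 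This attaches $\Psi$ to the near-equivalence class of $\sigma$; distinctness of parameters across distinct near-equivalence classes is then immediate, since $\Psi$ is recovered from the Satake data. Finally, the residual part of $\tilde{\mathcal{A}}$ is produced from cuspidal data on proper Levi subgroups of $\Mp(2n)$ --- products of general linear groups with a smaller metaplectic group --- via the Langlands--Moeglin--Waldspurger theory of Eisenstein series and their residues, and its parametrization follows from the cuspidal cases for $\Mp(2m)$ with $m<n$, supplied by induction.

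The principal obstacle, as in \cite{GI}, is the second step: showing that the global theta lift $\Theta_{\psi,2r_0+1}(\sigma)$ at the first occurrence is nonzero and cuspidal, with $r_0$ genuinely controlled in terms of $n$, so that Arthur's classification for $\SO(2r_0+1)$ may be brought to bear. This rests on the full strength of the Rallis inner product formula together with the regularized Siegel--Weil formula in the relevant ranges, and on a careful accounting of first occurrence indices up the tower; it is exactly here that the rank-raising device substitutes for the unavailable analytic input on nonvanishing of central $L$-values in higher rank. A subsidiary point demanding care is pinning down the precise shape of $\Psi_{\mathrm{aux}}$ and verifying that it shares no constituent with $\Psi$ --- equivalently, that the theta lift does not entangle the two blocks --- which secures the ellipticity of $\Psi$; and, on the representation-theoretic side, one must check that the passage through Arthur's classification is compatible with the normalization of the LLC for $\Mp(2n)$ furnished by Corollary~\ref{C:2.2}, so that the resulting $\Psi$ really is the $A$-parameter attached to $\sigma$ by the local Shimura correspondence at every unramified place.
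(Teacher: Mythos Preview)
Your approach is genuinely different from the paper's, and while the cuspidal part is broadly workable, the residual part has a real gap.

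\textbf{How the paper actually proceeds.} The paper does \emph{not} use first occurrence in the Witt tower. Instead it fixes any $r>n$ and works in the \emph{stable range}, lifting to the split $\SO(2n+2r+1)$. In that range the local theta lift $\theta_{\psi_v}(\sigma_v)$ is automatically nonzero for every $\sigma_v$, so the abstract lift $\theta^{\abs}_\psi(\sigma)=\bigotimes_v \theta_{\psi_v}(\sigma_v)$ exists with no nonvanishing hypothesis whatsoever. The key input is then J.-S.~Li's multiplicity inequality
\[
m_{\disc}(\sigma)\ \le\ m_{\disc}\bigl(\theta^{\abs}_\psi(\sigma)\bigr),
\]
which pushes an \emph{arbitrary} near-equivalence class $\Sigma\subset\tilde{\mathcal A}_{\disc}$ (cuspidal or residual, indifferently) into $\mathcal A_{\disc}(\SO(2n+2r+1))$. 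Arthur then supplies an $A$-parameter $\Psi^r$ for the image, and a short standard $L$-function pole computation (Proposition~\ref{P:psi}) forces $\Psi^r=\Psi\boxplus S_{2r}$ with $\Psi$ elliptic of dimension $2n$. That is the whole proof of Theorem~\ref{T:A}: no Rallis inner product, no regularized Siegel--Weil, no conservation relation, no first-occurrence bookkeeping, and no separate treatment of the residual spectrum. What you identify as ``the principal obstacle'' simply does not arise in the paper's argument for this theorem.

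\textbf{The gap in your residual step.} Your induction does not deliver what Theorem~\ref{T:A} asserts. Knowing the elliptic $A$-parameter $\Psi_0$ of the $\Mp(2m)$-factor of the cuspidal support, together with the cuspidal $\tau_i$ on the $\GL$-factors, determines the near-equivalence class of a residual $\sigma$ only after one also knows the point $\lambda$ at which the Eisenstein residue is taken. Showing that the resulting family of Satake parameters
\[
c_v(\sigma)\;=\;\bigoplus_i\bigl(c_v(\tau_i)\,|\cdot|_v^{\lambda_i}\oplus c_v(\tau_i)^{-1}|\cdot|_v^{-\lambda_i}\bigr)\ \oplus\ c_v(\Psi_0)
\]
assembles into an \emph{elliptic} $A$-parameter of $\Mp(2n)$ is exactly the determination of the residual spectrum (which $\lambda$ occur, which $\tau_i$ must be self-dual of which type, which summands must already appear in $\Psi_0$, etc.). Langlands' general theory of Eisenstein series does not supply this; for classical groups it is M\oe glin's theorem, and for $\Mp(2n)$ it is part of what one is trying to prove. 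The paper sidesteps this entirely because J.-S.~Li's inequality applies to the full discrete spectrum, so residual representations are handled by the same stable-range lift and the same $L$-function pole argument as cuspidal ones.

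A secondary point: even in your cuspidal case, the step ``strong multiplicity one forces $\Psi_\Sigma=\Psi\boxplus\Psi_{\mathrm{aux}}$'' tacitly requires showing that $\Psi_{\mathrm{aux}}$ occurs as an isobaric summand of $\Psi_\Sigma$ before one can subtract it; this is exactly the content of the $L$-function pole argument in \S\ref{P:psi}, and it is cleaner to invoke it directly than to route it through Jacquet--Shalika.
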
 
\vskip 10pt

Let us explain the various terms and notions in the theorem.
Elliptic global A-parameters of $\Mp(2n)$ are simply  elliptic global A-parameters of $\SO(2n+1)$, i.e. they have the form
  \[  \Psi  =\BIGboxplus_i  \Pi_i \boxtimes  S_{d_i} \] 
 where:
 \vskip 5pt

 \begin{itemize}
 \item $\Pi_i$ is a cuspidal representation of $\GL(n_i)$ which is of symplectic (resp. orthogonal) type if $d_i$ is odd (resp. even);
 \vskip 10pt
  
 \item $S_{d_i}$ is the irreducible  representation of $\SL_2(\C)$ of dimension $d_i$;
 \vskip 10pt

 \item $n_1d_1 +...+n_rd_r  = 2n$;

 \vskip 10pt

 \item $\Pi_i \boxtimes S_{d_i}  \not\simeq  \Pi_j \boxtimes S_{d_j}$  if $i \ne j$.
  \end{itemize}
For such a global A-parameter $\Psi$, we define its global component group by
\[  S_{\Psi} =  \prod_i  \Z/2\Z \cdot a_i,  \]
 i.e. $S_{\Psi}$ is a $\Z/2\Z$-vector space with a canonical basis indexed by the ``irreducible summand" $\Pi_i \boxtimes S_{d_i}$ of $\Psi$.
  \vskip 5pt

 Each $\Psi$ gives rise to a near equivalence class of representations of $\Mp(2n,\A)$ via the local Shimura correspondence, as follows.
  For almost all $v$, $\Pi_{i,v}$ is unramified for all $i$, with L-parameter $\phi_{i,v}$.  Set
 \[   \phi_{\Psi_v} =  \bigoplus_i  \phi_{i,v}  \otimes \left( 
  \begin{array} {cccc} 
 |-|_v^{(d_i -1)/2}  &     &   & \\
 
 & |-|_v^{(d_i -3)/2}  &  &  \\
 &  &   \ddots &  \\
 &  &  &  |-|_v^{-(d_i -1)/2}  \end{array}  \right).  \]
  This defines an unramified L-parameter for $\SO(2n+1, F_v)$ and determines an unramified representation of $\Mp(2n, F_v)$ by local Shimura correspondence determined by $\psi_v$. This collection of unramified representations determines a near equivalence class for $\Mp(2n)$ which we call the near equivalence class associated to the global A-parameter  $\Psi$ and $\psi$. 
 \vskip 10pt
 
 \subsection{The Tempered Part.}
 We say that an $A$-parameter $\Psi = \boxplus_i  \Pi_i \boxtimes  S_{d_i}$ is tempered if $d_i=1$ for all $i$.  For such tempered A-parameters, we have:
  \vskip 5pt
  
\begin{thm} \label{T:B}
We have an explicit description of $\tilde{\mathcal{A}}_{\Psi, \psi}$ when $\Psi = \boxplus_i  \Pi_i$ is tempered.
\end{thm}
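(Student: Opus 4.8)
The ``explicit description'' to be proved is an Arthur-type multiplicity formula, in exact analogy with Waldspurger's theorem for $\Mp(2)$ \cite{w1,w2}. To $\Psi=\boxplus_i\Pi_i$ one attaches a sign character $\epsilon_{\Psi,\psi}\colon S_\Psi\to\{\pm1\}$ built out of the root numbers $\epsilon(\tfrac12,\Pi_i)$ of the standard $L$-functions; the assertion is then that
\[
\tilde{\mathcal{A}}_{\Psi,\psi}\;\cong\;\bigoplus_{\eta}\ \bigotimes_v\sigma_{\phi_v,\eta_v},
\]
the sum running over those $\eta=\otimes_v\eta_v$ with $\eta_v\in\Irr(S_{\phi_v})$ whose associated character of $S_\Psi$ (via $S_\Psi\to\prod_v S_{\phi_v}$) equals $\epsilon_{\Psi,\psi}$, each summand occurring with multiplicity one, where $\sigma_{\phi_v,\eta_v}\in\tilde{\Pi}_{\phi_v,\psi_v}$ is the member of the local $L$-packet furnished by Corollary \ref{C:2.2}. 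The proof I would give follows Gan--Ichino \cite{GI}: run the global theta correspondence between $\Mp(2n)$ and the Witt tower of split odd orthogonal groups $\SO(2r+1)$, taking $r$ much larger than $n$ precisely in order to bypass the obstruction posed by the possible vanishing of $L(\tfrac12,\pi)$.

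First I would assemble the purely local ingredients, all due to Gan--Savin \cite{gs} (and partly recalled in \S 3): the LLC for $\Mp(2n,F_v)$; the explicit behaviour of the local theta lift from $\Mp(2n,F_v)$ to $\SO(2r+1,F_v)$ up the Witt tower, namely that the lift of a tempered $\sigma_{\phi_v,\eta_v}$ is the Langlands quotient whose $A$-parameter is $\phi_v\boxplus(\chi_v\boxtimes S_{2(r-n)})$ for an explicit quadratic character $\chi_v$; and, most importantly, the transformation of the character $\eta_v$ under theta, in which a local $\epsilon$-factor of the shape $\epsilon(\tfrac12,\phi_v\otimes\chi_v,\psi_v)$ intervenes. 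These are proved by doubling and see-saw arguments, and, taken in a product over all $v$, the resulting local $\epsilon$-dichotomies are exactly what will assemble into the global sign character $\epsilon_{\Psi,\psi}$; here the difference from $\SO(2n+1)$ is essential, since for the orthogonal group a tempered parameter carries the trivial sign character, whereas for $\Mp(2n)$ it need not.

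The global argument then has two halves. For the ``injective'' half I would take an irreducible summand $\tilde\pi=\otimes_v\tilde\pi_v$ of $\tilde{\mathcal{A}}_{\Psi,\psi}$; by Theorem \ref{T:A} it lies in the near-equivalence class of the tempered $\Psi$, hence is cuspidal. For $r\ge 2n$ the dual pair $(\Mp(2n),\SO(2r+1))$ is in the stable range, so the global theta lift $\Theta(\tilde\pi)$ to $\SO(2r+1)$ is nonzero, square-integrable, and the correspondence is injective; by Arthur's classification \cite{Art1} together with Moeglin's $A$-packets \cite{m2,m3}, $\Theta(\tilde\pi)$ lies in the discrete $A$-packet of $\Psi\boxplus(\chi\boxtimes S_{2(r-n)})$. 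Transporting Arthur's multiplicity formula on $\SO(2r+1)$ back through the local theta identifications of the previous paragraph — so that Arthur's sign character on the orthogonal side, combined with the product of the local theta-signs, yields exactly $\epsilon_{\Psi,\psi}$ — one reads off that $\tilde\pi\cong\otimes_v\sigma_{\phi_v,\eta_v}$ for an $\eta$ satisfying the matching condition, with multiplicity one. For the ``surjective'' (existence) half I would conversely fix $\eta$ satisfying the matching condition, take the corresponding member $\sigma'$ of the discrete $A$-packet of $\Psi\boxplus(\chi\boxtimes S_{2(r-n)})$ on $\SO(2r+1)$ (automorphic by Arthur), and theta lift it \emph{down} to $\Mp(2n)$: although this is below the stable range, $\sigma'$ is residual with a long tail $\chi\boxtimes S_{2(r-n)}$, so its standard $L$-function acquires many poles near the relevant point, and the Rallis inner product formula then forces the (regularized) theta lift to be nonzero for $r\gg n$ irrespective of whether $L(\tfrac12,\Pi_i)=0$; a constant-term computation shows this lift is cuspidal, and its local components are the predicted $\sigma_{\phi_v,\eta_v}$, producing the corresponding summand of $\tilde{\mathcal{A}}_{\Psi,\psi}$.

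The main obstacle is exactly this global theta analysis. The unconditional nonvanishing and the identification of what the global theta lift ``is'' rest on the Rallis inner product formula and the (regularized) Siegel--Weil formula for the dual pairs in question, which convert nonvanishing of theta lifts into nonvanishing of periods and of central $L$-values; this machinery is also what pins down the precise shape of $\epsilon_{\Psi,\psi}$, since the global root numbers $\epsilon(\tfrac12,\Pi_i)$ enter through it. A secondary but genuine technical point is that for $r$ only slightly past first occurrence the theta lift need not be cuspidal, so one must either handle residual theta lifts by regularization and constant-term computations or keep $r$ in a range where cuspidality is automatic while still being able to strip the tail $\chi\boxtimes S_{2(r-n)}$ from the $A$-parameter; verifying that the resulting description is independent of the auxiliary integer $r$, and hence intrinsic to $\Psi$ and $\psi$, is the technical heart of the argument.
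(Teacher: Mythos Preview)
Your overall strategy—theta lift to a large split $\SO(2n+2r+1)$ with $r>n$ and transport Arthur's multiplicity formula back—is exactly the paper's. But the mechanism you propose for the \emph{existence} direction differs from what the paper actually does, and the difference is worth spelling out.

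The paper never theta lifts \emph{down} globally from $\SO(2n+2r+1)$ to $\Mp(2n)$, and never invokes the Rallis inner product formula or regularized Siegel--Weil. Its engine is J.-S.~Li's chain of multiplicity inequalities (Theorem~\ref{T:li}):
\[
m_{\disc}(\sigma)\ \le\ m_{\disc}(\theta^{\abs}_\psi(\sigma))\ \le\ m(\theta^{\abs}_\psi(\sigma))\ \le\ m(\sigma).
\]
For a given automorphic $\pi\subset\mathcal{A}_{\Psi+S_{2r}}$ on the orthogonal side, a purely \emph{local} lemma shows $\pi\cong\theta^{\abs}_\psi(\sigma)$ as abstract representations for some genuine $\sigma$; Li's inequality then forces $m(\sigma)>0$. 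The key step (Proposition~\ref{P:key}) is to prove $m_{\cusp}(\sigma)=m_{\disc}(\sigma)=m(\sigma)$ whenever $\Psi$ is tempered, by a cuspidal-support argument: if $\sigma$ embedded non-cuspidally into $\tilde{\mathcal{A}}$, the weak lift of its cuspidal support to $\GL(2n)$ would fail to be a multiplicity-free sum of symplectic-type cuspidals, contradicting what the near-equivalence class $\Psi$ dictates. This squeezes Li's chain into equalities, giving $\theta^{\abs}_\psi(\tilde{\mathcal{A}}_{\Psi,\psi})=\mathcal{A}_{\Psi+S_{2r}}$ on the nose; one then observes $\epsilon_{\Psi+S_{2r}}=\tilde{\epsilon}_\Psi$ and transports the formula. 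The remaining (and, per the paper, most intricate) work is purely local: matching the packets inherited from $\SO(2n+2r+1)$ with the $L$-packets of Corollary~\ref{C:2.2}, including the labeling by $\Irr(S_{\Psi_v})$.

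Your downward-lift route trades this short combinatorial argument for substantial analytic machinery—regularized theta lifts of \emph{residual} representations on the $\SO$-side (the data with tail $S_{2r}$ are far from tempered and not cuspidal), Rallis inner product in that range—which the paper sidesteps entirely via Li's inequalities. Two smaller points: your ``hence is cuspidal'' in the injective half is precisely the content of Proposition~\ref{P:key} and is not automatic from Theorem~\ref{T:A}; and there is no auxiliary quadratic character $\chi$ in the tail—the orthogonal $A$-parameter is simply $\Psi\boxplus S_{2r}$, and the appearance of global root numbers in $\tilde{\epsilon}_\Psi$ comes from the identity $\epsilon_{\Psi+S_{2r}}=\tilde{\epsilon}_\Psi$ rather than from a product of local $\epsilon$-dichotomies along the Witt tower.
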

\vskip 5pt
 
 Let us give the explicit description claimed in the above theorem. 
More precisely, one has:
\vskip 5pt

\begin{itemize}
\item for each $v$, $\Psi_v $ gives rise to the local L-packet $\tilde{\Pi}_{\Psi_v, \psi_v}$
\vskip 10pt

\item the global L-packet  $\tilde{\Pi}_{\Psi,\psi}  = \otimes_v  \tilde{\Pi}_{\Psi_v, \psi_v}$.
\vskip 10pt

\item global and local component groups with a natural diagonal map:
\[ \Delta:  S_{\Psi}  = \bigoplus_i \Z/2\Z a_i  \longrightarrow  S_{\Psi, \A} :=  \prod_v  S_{\Psi_v}. \]

\item a quadratic character
\[  \tilde{\epsilon}_{\Psi} :  S_{\Psi}  \longrightarrow \{  \pm 1 \} \]
given by:
 \[  \tilde{\epsilon}_{\Psi}(a_i) =  \epsilon(1/2, \Pi_i). \] 
 \end{itemize}
 
 \subsection{Multiplicity Formula.}
 Now each $\eta  = \boxtimes_v  \eta_v \in {\rm Irr}(S_{\Psi, \A})$ gives rise to 
\[  \sigma_{\eta} = \otimes_v  \sigma_{\eta_v}  \in \tilde{\rm Irr}(\Mp(2n,\A)), \]
which is an abstract representation of $\Mp(2n,\A)$.
One has 
\[  \tilde{\mathcal{A}}_{\Psi,\psi}  =  \bigoplus_{\eta  \in {\rm Irr} S_{\Psi,\A}} m_{\eta}  \cdot \sigma_{\eta}  \]
with
\[  m_{\eta}  = \begin{cases}
1 & \text{  if $\Delta^*(\eta)  =  \tilde{\epsilon}_{\Psi}$; } \\
0 &  \text{ otherwise.} 
\end{cases} \]
  When $n=1$, this is precisely the result of Waldspurger from \cite{w1, w2}. For general $n$, it was conjectured in \cite{ggp}.
  \vskip 10pt
  
  \subsection{Theta Lifting}
 In the rest of this section, we are going to give a sketch of the proof of Theorem \ref{T:A} and Theorem \ref{T:B}.
The  proof of these theorems relies on the global theta correspondence between $\Mp(2n)$ and the split $\SO(2n+2r +1)$ with $r > n$.
 This theta correspondence has been studied by J.S. Li \cite{li1, li2, li3} both locally and globally and we shall formulate his results shortly.
 \vskip 10pt

 Given an abstract representation
 \[  \sigma = \otimes_v \sigma_v  \in \tilde{{\rm Irr}}(\Mp(2n,\A)),  \]
 set
 \[  m_{disc}(\sigma) = \dim \Hom_{\Mp(2n)} ( \sigma, \tilde{\mathcal{A}}_{disc})  \] 
 \[  m(\sigma)  =  \dim \Hom_{\Mp(2n)} ( \sigma, \tilde{\mathcal{A}}),  \]
 where  $\tilde{\mathcal{A}}_{disc}$ stands for the automorphic discrete spectrum whereas $\tilde{\mathcal{A}}$ stands for the space of all automorphic forms.
 \vskip 10pt
 
 Because $r > n$, so that we are in the so-called stable range, 
 we have a nonzero local theta lift 
 \[ 0 \ne  \theta_{\psi_v}(\sigma_v)  \in {\rm Irr} \left( \SO(2n+2r+1, F_v) \right). \]
 Set
 \[   \theta^{abs}_{\psi} (\sigma)  =  \otimes_v   \theta_{\psi_v}(\sigma_v)  \in {\rm Irr} \SO(2n+2r+1,\A). \]
 Likewise, we have the multiplicity $m_{disc}( \theta^{abs}_{\psi} (\sigma))$ and $m( \theta^{abs}_{\psi} (\sigma))$.
  \vskip 10pt
  
  \subsection{Results of J.S. Li} We have \cite{li3}:
 \begin{thm}[J.S. Li]  \label{T:li}
 Given $ \sigma  \in \tilde{{\rm Irr}}(\Mp(2n,\A))$, 
 one has:
 \[  m_{disc}(\sigma)  \leq m_{disc}(\theta_{\psi}^{abs}(\sigma)) \leq m(\theta^{abs}_{\psi}(\sigma))  \leq m(\sigma). \]
  \end{thm}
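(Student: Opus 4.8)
Write $H = \SO(2n+2r+1)$ for the split group, and for $G \in \{\Mp(2n),H\}$ write $[G]$ for the associated automorphic quotient. The middle inequality $m_{disc}(\theta^{abs}_{\psi}(\sigma)) \le m(\theta^{abs}_{\psi}(\sigma))$ is immediate from $\tilde{\mathcal{A}}_{disc} \subseteq \tilde{\mathcal{A}}$ on $H$, so the content lies in the two outer inequalities, which I would obtain by running the \emph{same} argument through the two directions of the global theta correspondence for the dual pair $(\Mp(2n),\OO(2n+2r+1))$. Fix a polarization, and for a Schwartz function $\varphi$ on the relevant space of $\A$-points let $\Theta_{\varphi}(g,h)$ be the associated theta kernel attached to $\psi$, an automorphic function on $[\Mp(2n)] \times [H]$; for $f$ an automorphic form on $[\Mp(2n)]$ set $\theta_{\varphi}(f)(h) = \int_{[\Mp(2n)]} \Theta_{\varphi}(g,h)\,\overline{f(g)}\,dg$, and define $\theta_{\varphi}$ in the other direction similarly. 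Since $r>n$ we are in the stable range, so all these integrals converge.

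For $m_{disc}(\sigma) \le m_{disc}(\theta^{abs}_{\psi}(\sigma))$, start from an embedding $\iota \colon \sigma \hookrightarrow \tilde{\mathcal{A}}_{disc}$ and, for each $\varphi$, consider $h \mapsto \theta_{\varphi}(\iota(f))(h)$ as $f$ runs over $\sigma$. Three things are needed: (a) these are square-integrable automorphic forms on $[H]$ --- this is exactly J.S. Li's theorem that theta lifting in the stable range carries the discrete spectrum into the discrete spectrum; (b) as an $H(\A)$-module the span of these forms (over all $\varphi$) is isomorphic to $\theta^{abs}_{\psi}(\sigma) = \otimes_v \theta_{\psi_v}(\sigma_v)$, which follows place by place from the local Howe duality underlying Theorem~\ref{T:2.1}; and (c) the big theta lift kills no nonzero copy of $\sigma$ inside $\tilde{\mathcal{A}}_{disc}$. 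Granting these, $\iota \mapsto \theta_{\varphi}\circ\iota$ induces an injection $\Hom_{\Mp(2n)}(\sigma,\tilde{\mathcal{A}}_{disc}) \hookrightarrow \Hom_{H}(\theta^{abs}_{\psi}(\sigma),\tilde{\mathcal{A}}_{disc})$, which is the claim. I would prove (c) via the Rallis inner product formula: doubling $H$ expresses the Petersson norm of $\theta_{\varphi}(\iota(f))$ as a product of local doubling zeta integrals times the value at the relevant point of a Siegel--Weil Eisenstein series on $\Mp(4n)$; because $r>n$ this point lies in the range of absolute convergence, where Weil's Siegel--Weil formula applies and the value is a nonzero constant, while the local zeta integrals are nonzero for a suitable $\varphi$. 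Thus $\|\theta_{\varphi}(\iota(f))\|$ can be made nonzero, and the same computation shows the lift is isometric, up to a nonzero scalar, on each isotypic copy.

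For $m(\theta^{abs}_{\psi}(\sigma)) \le m(\sigma)$, apply the analogous construction in the reverse direction to $\Pi = \theta^{abs}_{\psi}(\sigma)$, lifting automorphic forms on $[H]$ down to $[\Mp(2n)]$ (only automorphy is asserted here, so no discreteness statement is needed). By the local theta correspondence in the stable range, $\theta_{\psi_v}(\theta_{\psi_v}(\sigma_v)) = \sigma_v$ for every $v$, so the downward lift of $\Pi$ transforms under $\theta^{abs}_{\psi}(\Pi) = \sigma$; the same mechanism then yields a map $\Hom_{H}(\Pi,\tilde{\mathcal{A}}) \to \Hom_{\Mp(2n)}(\sigma,\tilde{\mathcal{A}})$, and one must again show it is injective, i.e. that the downward theta lift is nonvanishing on every copy of $\Pi$.

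This last nonvanishing is, I expect, the main obstacle. The pair $(\Mp(2n),\OO(2n+2r+1))$ is in the stable range only from the $\Mp(2n)$ side, and running the Rallis inner product formula in the backward direction now computes the Petersson norm of the downward lift in terms of a Siegel--Weil Eisenstein series on the doubled orthogonal group $\SO(4n+4r+2)$ evaluated \emph{outside} the range of absolute convergence; so one must invoke the regularized Siegel--Weil formula and control the relevant (possibly singular, possibly vanishing) special value --- equivalently, one must show that lifting the ``small'' representation $\theta^{abs}_{\psi}(\sigma)$ back down to $\Mp(2n)$ does not vanish. Establishing this in the stable range $r>n$, and thereby upgrading the nonvanishing to the multiplicity inequality, is precisely the technical core supplied by J.S. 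Li \cite{li3}; once it is in hand, the remaining ingredients (square-integrability of the upward lift, the identification of the local components of the lifts, and the reduction of a multiplicity inequality to nonvanishing of a Petersson pairing on an isotypic component) are formal.
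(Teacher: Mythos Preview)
The paper does not itself prove this theorem; it simply attributes the result to J.~S.~Li \cite{li3} and moves on. So there is no proof in the paper to compare against, and your proposal has to be assessed on its own terms.

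Your treatment of the first two inequalities is sound: the stable-range theta lift from $\Mp(2n)$ to $H$ preserves square-integrability, and the Rallis inner product in this direction lies in the range of absolute convergence of the Siegel Eisenstein series, giving the nonvanishing needed for injectivity on $\Hom$-spaces.

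The gap is in your framing of the third inequality $m(\theta^{abs}_\psi(\sigma)) \le m(\sigma)$. You propose running the global theta integral \emph{backward} from an arbitrary automorphic realization $j\colon \Pi \hookrightarrow \mathcal{A}(H)$ down to $\Mp(2n)$, and then detecting nonvanishing via a regularized Rallis inner product. There are two obstructions. First, because $m(\cdot)$ counts embeddings into the full space of automorphic forms, $j(F)$ is only of moderate growth, and the integral $\int_{[H]} \Theta_\varphi(g,h)\,\overline{j(F)(h)}\,dh$ has no reason to converge. Second, the Rallis inner product formula computes a Petersson norm and so presupposes that both the input and the lift lie in $L^2$, which is not given here. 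You are right that this step is the technical heart of \cite{li3}, but Li's mechanism is different from the one you sketch: he exploits that $\Pi = \theta^{abs}_\psi(\sigma)$ is a \emph{low-rank} representation in his sense, so that any automorphic form realizing $\Pi$ has only degenerate (low-rank) Fourier coefficients along a suitable maximal parabolic of $H$. Read through the mixed model of the Weil representation, those Fourier coefficients directly produce an automorphic realization of $\sigma$ on $\Mp(2n)$, and this Fourier-coefficient construction is what supplies the injection $\Hom_H(\Pi, \mathcal{A}) \hookrightarrow \Hom_{\Mp(2n)}(\sigma, \mathcal{A})$, with no convergence or $L^2$ hypotheses required.
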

 
 \vskip 5pt
 
 \begin{cor}
Let $\Sigma \subset \tilde{\mathcal{A}}_{disc}$ be a near equivalence class, say
\[  \Sigma = \bigoplus_i m_i  \sigma_i.  \] 
 Set 
 \[  \theta^{abs}_{\psi}(\Sigma)  = \bigoplus_i  m_i  \theta^{abs}_{\psi}(\sigma_i). \]
 Then
 \[  \theta^{abs}_{\psi}(\Sigma)  \subset \mathcal{A}_{disc}(\SO(2n+2r+1)) \]
 
 \end{cor}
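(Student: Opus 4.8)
The plan is to deduce this directly from Theorem~\ref{T:li}, combined with the injectivity of the local theta correspondence in the stable range. Write the isotypic decomposition $\Sigma = \bigoplus_i m_i \sigma_i$, so that each $\sigma_i = \otimes_v \sigma_{i,v}$ is irreducible and $m_i = m_{disc}(\sigma_i) > 0$. Since $r > n$ we are in the stable range, so for every place $v$ the local lift $\theta_{\psi_v}(\sigma_{i,v})$ to $\SO(2n+2r+1, F_v)$ is nonzero and irreducible, and hence $\theta^{abs}_\psi(\sigma_i) = \otimes_v \theta_{\psi_v}(\sigma_{i,v})$ is a well-defined irreducible abstract representation of $\SO(2n+2r+1,\A)$.

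First I would apply the left-most inequality of Theorem~\ref{T:li} to each $\sigma_i$, which gives
\[ m_{disc}\bigl( \theta^{abs}_\psi(\sigma_i) \bigr) \;\geq\; m_{disc}(\sigma_i) \;=\; m_i \;>\; 0 . \]
Thus every $\theta^{abs}_\psi(\sigma_i)$ already occurs in $\mathcal{A}_{disc}(\SO(2n+2r+1))$, and what remains is to check that the summands of $\theta^{abs}_\psi(\Sigma)$ contribute independently, i.e.\ that $i \mapsto \theta^{abs}_\psi(\sigma_i)$ is injective on isomorphism classes. Here I would invoke the fact, due to J.\,S.\ Li \cite{li1,li2}, that in the stable range the local correspondence $\sigma_v \mapsto \theta_{\psi_v}(\sigma_v)$ is injective: if $\sigma_i \not\simeq \sigma_j$ with $i \ne j$ then they differ at some place $v$, hence $\theta_{\psi_v}(\sigma_{i,v}) \not\simeq \theta_{\psi_v}(\sigma_{j,v})$, and therefore $\theta^{abs}_\psi(\sigma_i) \not\simeq \theta^{abs}_\psi(\sigma_j)$.

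Combining the two points, the pairwise non-isomorphic representations $\theta^{abs}_\psi(\sigma_i)$ each occur in $\mathcal{A}_{disc}(\SO(2n+2r+1))$ with multiplicity at least $m_i$, so $\theta^{abs}_\psi(\Sigma) = \bigoplus_i m_i\, \theta^{abs}_\psi(\sigma_i)$ admits an $\SO(2n+2r+1,\A)$-equivariant embedding into $\mathcal{A}_{disc}(\SO(2n+2r+1))$, which is exactly the assertion. I would also point out what is \emph{not} claimed: $\theta^{abs}_\psi(\Sigma)$ need not exhaust a near equivalence class of $\SO(2n+2r+1)$, only sit inside its discrete spectrum.

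I do not anticipate a genuine obstacle: once Theorem~\ref{T:li} is in hand this corollary is essentially formal, the real work having been absorbed into that theorem's proof — J.\,S.\ Li's seesaw/Rallis-type comparison of discrete versus full automorphic multiplicities on the two sides of the correspondence, it being precisely the inequality $m_{disc}(\theta^{abs}_\psi(\sigma)) \leq m(\sigma)$ that prevents the chain from collapsing. The only additional ingredient is the injectivity used to separate the summands, which is already established in the stable range and may simply be cited; the one thing a careful write-up must pin down is that ``$\theta^{abs}_\psi(\Sigma) \subset \mathcal{A}_{disc}$'' is understood in the sense of the $\SO(2n+2r+1,\A)$-module embedding constructed above.
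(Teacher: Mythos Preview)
Your proposal is correct and is exactly the intended deduction: the paper states the corollary immediately after Theorem~\ref{T:li} without writing out a proof, treating it as an immediate consequence of the leftmost inequality $m_{disc}(\sigma) \leq m_{disc}(\theta^{abs}_\psi(\sigma))$ together with the injectivity of the stable-range local theta lift. Your write-up simply makes explicit what the paper leaves to the reader.
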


 \subsection{Assignment of A-parameters}
By the above corollary, to $\theta^{abs}_{\psi}(\Sigma)$, one can assign  by Arthur an A-parameter 
\[  \Psi^r = \BIGboxplus_i  \Pi_i \boxtimes S_{d_i} \]
 of $\SO(2n+2r+1)$. We now note:
\vskip 5pt

\begin{prop}  \label{P:psi}
\[  \Psi^r  = \Psi  \boxplus   S_{2r} \]
for an  elliptic A-parameter $\Psi$ of $\SO(2n+1)$. 
\end{prop}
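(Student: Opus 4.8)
The plan is to pin down the A-parameter $\Psi^r$ of the big orthogonal group $\SO(2n+2r+1)$ by computing the unramified local parameters $\Psi^r_v$ at almost all places $v$, since an elliptic A-parameter for a classical group is determined by its near equivalence class, hence by the collection $\{\Psi^r_v\}_{v \notin S}$ for any finite set $S$. At such a place, $\sigma_v$ is an unramified genuine representation of $\Mp(2n,F_v)$, corresponding under the local Shimura correspondence (Theorem~\ref{T:2.1}, Corollary~\ref{C:2.2}) to an unramified L-parameter $\phi_{\Psi_v}\colon WD_{F_v} \to \Sp_{2n}(\C)$ which, by the definition of the near equivalence class attached to $\Psi$, is $\bigoplus_i \phi_{i,v} \otimes \left(\begin{smallmatrix}|-|^{(d_i-1)/2} & & \\ & \ddots & \\ & & |-|^{-(d_i-1)/2}\end{smallmatrix}\right)$, i.e.\ $\phi_{\Psi_v} = \bigoplus_i \phi_{i,v} \boxtimes S_{d_i}$ as a parameter valued in $\Sp_{2n}(\C)$.

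The key step is then to compute the effect of the stable-range theta lift $\theta_{\psi_v}$ on unramified parameters. This is exactly the content of the unramified/local results of J.-S.~Li \cite{li1,li2,li3}: for $\sigma_v$ unramified with parameter $\phi_{\Psi_v}$ valued in $\Sp_{2n}(\C)$, the lift $\theta_{\psi_v}(\sigma_v)$ to $\SO(2n+2r+1,F_v)$ is unramified with parameter
\[
\phi_{\Psi_v} \ \oplus\ \Bigl( |-|_v^{(2r-1)/2} \oplus |-|_v^{(2r-3)/2} \oplus \cdots \oplus |-|_v^{-(2r-1)/2}\Bigr)
\]
valued in $\Sp_{2n+2r}(\C)$, where the extra block is precisely $S_{2r}$, the $2r$-dimensional representation of the Arthur $\SL_2$. (This "adding a $\rho$-block" formula is the unramified shadow of the doubling/going-up behaviour of theta, and in the stable range the lift is nonvanishing, irreducible and unramified by Li.) Combining with the previous paragraph, $\theta^{abs}_\psi(\Sigma)_v$ has parameter $\phi_{\Psi_v} \oplus (\text{$S_{2r}$-block})$, and matching this against the unramified local components of the Arthur parameter $\Psi^r = \BIGboxplus_i \Pi_i \boxtimes S_{d_i}$ assigned by the Corollary gives $\Psi^r_v = \Psi_v \boxplus (S_{2r})_v$ for almost all $v$. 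By the strong multiplicity one / rigidity properties of isobaric automorphic representations of $\GL$ underlying Arthur's classification, this forces $\Psi^r = \Psi \boxplus S_{2r}$ globally, with $\Psi = \BIGboxplus_{i\colon S_{d_i}\neq S_{2r}\text{-summand}} \Pi_i \boxtimes S_{d_i}$; one checks $\Psi$ has the right rank $2n$ and remains a valid elliptic A-parameter of $\SO(2n+1)$ because the $\Pi_i\boxtimes S_{d_i}$ occurring in it satisfy the self-duality and parity constraints inherited from $\Psi^r$, and the summand $\1\boxtimes S_{2r}$ (trivial character times $S_{2r}$, orthogonal type since $2r$ is even) is distinct from all of them.

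The main obstacle is the precise bookkeeping in the second paragraph: one must verify that the trivial character $\1$ of $\GL(1)$ paired with $S_{2r}$ is genuinely a \emph{summand} of $\Psi^r$ rather than being absorbed into some $\Pi_i \boxtimes S_{d_i}$ with $\Pi_i$ nontrivial or $d_i \neq 2r$ — this uses that $r > n$ so that $2r > 2n \geq n_i d_i$ for every $i$ coming from $\Psi$, hence the $S_{2r}$-block cannot be "hidden" and must appear on its own; and one must confirm Li's local theta formula in the exact normalization (dependence on $\psi_v$, the split form of $\SO(2n+2r+1)$, the shift by $|-|^{(2r-1)/2}$) matches the normalization of the local Shimura correspondence used to define the near equivalence class. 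Once these normalizations are aligned, the argument is a formal consequence of Li's results and Arthur's multiplicity-one input.
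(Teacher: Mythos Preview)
Your local computation---that the unramified theta lift appends the block $|-|_v^{(2r-1)/2} \oplus \cdots \oplus |-|_v^{-(2r-1)/2}$ to the Satake parameter of $\sigma_v$---is exactly the input the paper uses, but your global step has a gap and the write-up is circular. You describe the parameter of $\sigma_v$ as $\bigoplus_i \phi_{i,v} \boxtimes S_{d_i}$, the localization of an A-parameter $\Psi$ of $\SO(2n+1)$; but no such $\Psi$ exists yet---producing it is the content of the proposition. All you actually have at this stage is, for each $v \notin S$, an unramified $2n$-dimensional symplectic parameter $\phi_v$ with no global object behind it. The appeal to rigidity then does not close the argument: Jacquet--Shalika says an isobaric representation of $\GL(N)$ is determined by its unramified local components, but to apply it you would need a second global isobaric representation ``$(\text{something with components } \phi_v) \boxplus (\1 \boxtimes S_{2r})$'' to compare $\Psi^r$ against, and the existence of that first factor is precisely what is in question. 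Your size bound ``$2r > 2n \geq n_i d_i$ for every $i$ coming from $\Psi$'' again presupposes $\Psi$.

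The paper instead packages the same local input as an identity of partial standard $L$-functions,
\[
L^S\bigl(s, \theta^{abs}_\psi(\sigma)\bigr) \;=\; L^S_\psi(s,\sigma)\cdot \zeta^S\bigl(s+r-\tfrac12\bigr)\cdots \zeta^S\bigl(s-r+\tfrac12\bigr),
\]
whose largest pole is visibly at $s = r + \tfrac12 > n+1$, and compares this with the product $\prod_i \prod_{j=1}^{d_i} L^S\bigl(s + \tfrac{d_i+1}{2} - j, \Pi_i\bigr)$ coming from the already-known global A-parameter $\Psi^r = \boxplus_i \,\Pi_i \boxtimes S_{d_i}$ on $\SO(2n+2r+1)$. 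For that product to have its largest pole at $r + \tfrac12$ one is forced to have some $\Pi_i = \1$ with $d_i = 2r$, i.e.\ the summand $\1 \boxtimes S_{2r}$. This pole argument is exactly the device that converts the local Satake observation into a statement about global isobaric constituents; it replaces, rather than follows from, the rigidity you invoke.
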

\vskip 5pt
 
Hence, one defines $\Psi$ to be the A-parameter of $\Sigma$, so that
\[  \Sigma =  \tilde{\mathcal{A}}_{\Psi}. \]  
This gives Theorem \ref{T:A}.
 \vskip 5pt
 
 \subsection{Proof of Prop. \ref{P:psi}.} Let us give  a brief sketch of the proof of Proposition \ref{P:psi}.
Consider in two different ways the partial standard L-function:
\[  L^S(s,  \theta^{abs}_{\psi}(\sigma))   \]
On one hand, it  follows by our understanding of the local theta correspondence for unramified representations that this partial L-function  is equal to
\[    L^S_{\psi}(s,  \sigma)  \cdot \zeta^S \left(s+ r-\frac{1}{2}\right) \cdot  \zeta^S \left(s+ r - \frac{3}{2}\right)  \cdot\cdots\cdot  \zeta^S \left(s - r + \frac{1}{2}\right). \]
This has largest pole at
\[  s  = r + \frac{1}{2} > n +1. \]
 
On the other hand, from the form of the A-parameter of $ \theta^{abs}_{\psi}(\sigma)$,  the same partial L-function is equal to
\[  \prod_i  \prod_{j=1}^{d_i}  L^S\left( s+ \frac{d_i +1}{2} - j, \Pi_i \right). \]
For this to have largest pole at $s = r+1/2$, one sees readily that it is necessary for  $ S_{2r} \subset \Psi^r$. 
 \vskip 10pt
 
 \subsection{Equality?}
We have shown that there is a containment
\[ \theta^{abs}_{\psi}(\Sigma)  =  \theta^{abs}_{\psi}( \tilde{\mathcal{A}}_{\Psi,\psi})  \subset \mathcal{A}_{\Psi^r}  \]
and it is natural to ask if equality holds.   If equality holds, then one can simply transport the description of $\mathcal{A}_{\Psi^r}$ to get 
a description of $\tilde{\mathcal{A}}_{\Psi,\psi}$ in the style of Theorem \ref{T:B}. To this end, we first have:
 
\vskip 5pt

\begin{lem} 
Given $\pi \subset  \mathcal{A}_{\Psi^r}$, one has 
\[  \pi \cong \theta^{abs}_{\psi}(\sigma)  \]
as abstract representations, for some irreducible genuine representation $\sigma$ of $\Mp(2n,\A)$.  
\end{lem}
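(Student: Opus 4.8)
The plan is to prove this place by place and then glue, since the conclusion --- an isomorphism of \emph{abstract} representations --- is local in nature. First I would note that it suffices to produce, for each place $v$, an irreducible genuine representation $\sigma_v$ of $\Mp(2n,F_v)$ with $\theta_{\psi_v}(\sigma_v)\cong\pi_v$, together with the (necessary) check that $\sigma_v$ is unramified for almost all $v$; then $\sigma:=\otimes_v\sigma_v$ lies in $\tilde{\Irr}(\Mp(2n,\A))$ and $\theta^{abs}_{\psi}(\sigma)=\otimes_v\theta_{\psi_v}(\sigma_v)\cong\pi$. Because $r>n$, at every place we are in the stable range with $\Mp(2n,F_v)$ the smaller member, and I would use the fact (due to J.\,S.~Li, together with the Howe duality theorem) that the local theta lift $\theta_{\psi_v}\colon\tilde{\Irr}(\Mp(2n,F_v))\to\Irr(\SO(2n+2r+1,F_v))$ is then injective, and that an irreducible $\pi_v$ lies in its image as soon as the backward theta lift $\theta_{\psi_v^{-1}}(\pi_v)$ to $\Mp(2n,F_v)$ is nonzero, in which case $\theta_{\psi_v}\big(\theta_{\psi_v^{-1}}(\pi_v)\big)\cong\pi_v$. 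So the whole matter reduces to showing that $\theta_{\psi_v^{-1}}(\pi_v)\ne 0$ for every $v$, and I would then simply set $\sigma_v:=\theta_{\psi_v^{-1}}(\pi_v)$.

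At the places where $\pi_v$ and $\psi_v$ are unramified, this nonvanishing is immediate from the unramified theta correspondence: $\pi_v$ is the unramified constituent attached to $\phi_{\Psi^r_v}=\phi_{\Psi_v}\oplus\big(|-|_v^{r-1/2}\oplus\cdots\oplus|-|_v^{1/2-r}\big)$, and its backward lift is the unramified genuine representation of $\Mp(2n,F_v)$ attached to $\phi_{\Psi_v}$ by the local Shimura correspondence of Theorem~\ref{T:2.1}; in particular $\sigma_v$ is unramified there. So only the finitely many remaining places would need further work.

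Those are where the real content lies. Since $\pi\subset\mathcal{A}_{\Psi^r}$, Arthur's multiplicity formula for $\SO(2n+2r+1)$ forces $\pi_v$ into the local A-packet attached to $\Psi^r_v=\Psi_v\boxplus(\1\boxtimes S_{2r})$, so it would be enough to show that every member of that A-packet has nonzero theta lift to $\Mp(2n,F_v)$. The key feature to exploit is that the block $\1\boxtimes S_{2r}$ has $2r>2n$ and therefore ``dominates'' the parameter: I expect that Moeglin's explicit construction of A-packets for odd orthogonal groups, combined with the compatibility of the theta correspondence with A-packets in the stable range (a form of the Adams conjecture, available here through Moeglin's work), shows that the metaplectic first occurrence of any such $\pi_v$ is at most $2n$, i.e.\ $\theta_{\psi_v^{-1}}(\pi_v)\ne 0$. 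This would finish the proof. The hard part, accordingly, will be this last local input: pinning down the members of the A-packets of $\Psi^r_v$ and their behavior in the theta tower. An alternative route would be to argue globally, showing as in the proof of Proposition~\ref{P:psi} that $L^S(s,\pi)$ has its largest pole at $s=r+\tfrac12$ and invoking the Kudla--Rallis theory of poles of Siegel Eisenstein series to force the global theta lift of $\pi$ to be nonzero already on $\Mp(2n,\A)$ --- though then one must also contend with the fact that $\pi$ is typically residual rather than cuspidal.
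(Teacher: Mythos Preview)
The paper does not supply a proof of this lemma; it is stated and then immediately used. What the paper does say (in \S 2.12) is that the local questions surrounding the A-packets $\mathcal{A}_{\Psi^r_v}$ and their theta lifts form ``the most difficult and intricate part'' of \cite{GI}, and that the argument there ``uses local results of Moeglin \cite{m2,m3} on explicating the local A-packets of $\SO(2n+1)$ and global arguments involving the global multiplicity formula of Arthur \cite{Art1} for all inner forms of $\SO(2n+1)$.'' Your sketch is consistent with this: you correctly reduce the lemma to the purely local statement that every member of the A-packet attached to $\Psi_v^r = \Psi_v \boxplus S_{2r}$ on $\SO(2n+2r+1,F_v)$ has first occurrence at most $n$ in the metaplectic Witt tower, and you correctly point to Moeglin's explicit description of these A-packets (and the Adams conjecture in the stable range) as the relevant input.

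Two comments. First, your reduction step is fine, but the use of $\psi_v^{-1}$ for the backward lift is a convention you should be explicit about; the symmetry of Howe duality already gives $\theta_{\psi_v}(\sigma_v)\cong\pi_v$ once $\sigma_v$ and $\pi_v$ correspond under the Weil representation for $\psi_v$, so no twist is needed. Second, and more substantively, note that the paper's pointer in \S 2.12 mentions \emph{global} arguments in addition to Moeglin's local results. Your sketch treats the nonvanishing as a purely local matter, with the global route offered only as an alternative; in the actual argument of \cite{GI}, global inputs (Arthur's multiplicity formula across inner forms) play a genuine role in pinning down the local A-packet members and their behavior under theta lifting, particularly in controlling the labeling by $\Irr(S_{\Psi_v})$. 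For the bare nonvanishing statement of the lemma your local plan is the right skeleton, but you should be aware that carrying it out completely --- especially at archimedean places, where Moeglin's results do not directly apply --- is where the work in \cite{GI} lies.
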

\vskip 5pt

As a consequence of the lemma, since $m_{disc}(\pi) > 0$, J.S. Li's inequality in Theorem \ref{T:li} implies
\[  m(\sigma)  > 0, \quad \text{i.e. $\sigma$ is automorphic.} \]
 
 \subsection{Key proposition for tempered $\Psi$.} We now come to the key proposition in the proof of Theorem \ref{T:B}.
 \vskip 5pt
 
\begin{prop}  \label{P:key}
If $\Psi$ is tempered, then with $\sigma \in \Pi_{\Psi,\psi}$ as above
\[   m_{cusp}(\sigma) = m_{disc}(\sigma)  =  m(\sigma), \]
with $m_{cusp}(\sigma)$ denoting the multiplicity of $\sigma$ in the cuspidal spectrum. 
In particular,
\[  m_{disc}(\sigma)  = m_{disc}(\theta^{abs}(\sigma)) \]
so that
\[  \theta^{abs}(\tilde{\mathcal{A}}_{\Psi,\psi}) = \mathcal{A}_{\Psi + S_{2r}} . \]
\end{prop}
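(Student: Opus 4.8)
The plan is to prove the chain of equalities $m_{cusp}(\sigma) = m_{disc}(\sigma) = m(\sigma)$ by squeezing these quantities between the bounds furnished by J.S.~Li's inequality (Theorem \ref{T:li}) together with the structure of the tempered $A$-parameter $\Psi^r = \Psi \boxplus S_{2r}$. First I would record that, because $\Psi = \boxplus_i \Pi_i$ is tempered, its image $\Psi^r$ under theta lifting has the very special shape in which the ``extra'' summand $S_{2r}$ is a single Arthur block with nontrivial $\SL_2$; all the $\Pi_i$ appear with $d_i = 1$. Arthur's description of $\mathcal{A}_{\Psi^r}(\SO(2n+2r+1))$ then tells us two things: (a) the representations in this packet that are automorphic all occur in the \emph{discrete} spectrum, with multiplicity governed by the Arthur multiplicity formula attached to $\Psi^r$; and (b) since $\Psi^r$ is \emph{not} tempered (it has the $S_{2r}$ factor), one must control whether these discrete-spectrum representations are cuspidal or residual. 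The key point I would isolate is that the residual representations in $\mathcal{A}_{\Psi^r}$ come from poles of Eisenstein series whose cuspidal support involves $\Pi_i \boxtimes S_{d_i}$ with $d_i > 1$ somewhere in the inducing data — but for $\Psi^r = \Psi \boxplus S_{2r}$ with $\Psi$ tempered, the theta lift $\theta^{abs}_\psi(\sigma)$ of a representation $\sigma$ in the \emph{tempered} packet $\tilde\Pi_{\Psi,\psi}$ lands (via the explicit unramified and Rallis-inner-product computations) precisely in the part of the packet realized by honest cusp forms, not by residues.

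Concretely, the steps in order would be: (1) Use the Lemma preceding the proposition to write each $\pi \subset \mathcal{A}_{\Psi^r}$ as $\theta^{abs}_\psi(\sigma)$ for an abstract $\sigma \in \tilde{\Irr}(\Mp(2n,\A))$, and check that when $\pi$ lies in the \emph{cuspidal} part of $\mathcal{A}_{\Psi^r}$ the corresponding $\sigma$ lies in the tempered local packet $\tilde\Pi_{\Psi,\psi}$ (matching local components via Theorem \ref{T:2.1} and the unramified computation of \S on the assignment of $A$-parameters). (2) Prove the reverse direction — that for $\sigma \in \tilde\Pi_{\Psi,\psi}$ automorphic, $\theta^{abs}_\psi(\sigma)$ is \emph{cuspidal} on $\SO(2n+2r+1)$ — via the tower property of theta lifts: the first occurrence of $\sigma$ in the orthogonal tower is at a Witt index $\le n$ (since $\sigma$ is tempered, by Rallis's tower argument and the nonvanishing of local theta lifts in the stable range), hence the lift to the much larger group $\SO(2n+2r+1)$ with $r > n$ is in the stable range and its global theta lift is cuspidal — this is exactly where J.S.~Li's cuspidality criterion in \cite{li3} enters. (3) Combine: $m_{cusp}(\theta^{abs}_\psi(\sigma)) = m_{disc}(\theta^{abs}_\psi(\sigma))$ by step (2) applied on the orthogonal side, and $m_{disc}(\sigma) \le m_{disc}(\theta^{abs}_\psi(\sigma)) \le m(\theta^{abs}_\psi(\sigma)) \le m(\sigma)$ by Theorem \ref{T:li}; finally, a back-lifting (theta lift from $\SO(2n+2r+1)$ back to $\Mp(2n)$, again in the stable range, with a Rallis inner product formula to detect nonvanishing) gives $m(\sigma) \le m_{cusp}(\sigma)$, closing the loop so that all four quantities coincide. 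The equality $\theta^{abs}(\tilde{\mathcal{A}}_{\Psi,\psi}) = \mathcal{A}_{\Psi + S_{2r}}$ then follows by comparing multiplicities term by term, since both sides decompose over the same abstract representations $\sigma$ with matching multiplicities.

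The main obstacle I expect is step (2) — showing the theta lift of a tempered automorphic $\sigma$ on $\Mp(2n)$ to $\SO(2n+2r+1)$ is genuinely cuspidal, not residual. In Li's work the relevant statement is that the \emph{first} nonzero occurrence in the tower is cuspidal, and that lifts strictly beyond first occurrence but still in the stable range remain cuspidal; but one has to rule out the possibility that $\theta^{abs}_\psi(\sigma)$ picks up a residual constituent, which would happen if $\sigma$ had a nonvanishing lift to some smaller orthogonal group producing an Eisenstein pole. For tempered $\sigma$ the needed input is that $L(\tfrac12, \sigma)$-type obstructions do not force early vanishing in a way that would push the first occurrence too low; the cleanest route is to invoke the local-global compatibility of first occurrence (the conservation relation) together with the tempered-ness of all local components $\sigma_v$, which forces the first occurrence index on each side to sit in the predicted range. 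A secondary, more technical obstacle is making the back-lifting step (from $\SO(2n+2r+1)$ to $\Mp(2n)$) produce a \emph{cuspidal} preimage rather than merely an automorphic one — this again uses the stable range and a Rallis inner product formula, but the cuspidality requires knowing that the relevant period integrals / doubling $L$-values are nonzero, which for tempered $\Psi$ is ensured because the $L$-functions $L(s,\Pi_i)$ are holomorphic and nonzero at the relevant points.
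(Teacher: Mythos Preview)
Your plan contains a genuine error and misses the paper's much simpler argument.

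\textbf{The error.} In your Step (2) you assert that $\theta^{abs}_\psi(\sigma)$ is \emph{cuspidal} on $\SO(2n+2r+1)$, and you justify this by saying ``lifts strictly beyond first occurrence but still in the stable range remain cuspidal.'' This is exactly backwards: Rallis's tower property says the first nonzero occurrence is cuspidal and every occurrence \emph{beyond} it is non-cuspidal. Since you have already argued the first occurrence is at Witt index $\le n$ and you are lifting to Witt index $n+r$ with $r>n$, the lift is well past first occurrence and hence non-cuspidal. Indeed the $A$-parameter $\Psi^r = \Psi \boxplus S_{2r}$ is nontempered, and the members of $\mathcal{A}_{\Psi^r}$ are typically residual, not cuspidal. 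So Step (2) fails, and with it your Step (3), which relies on cuspidality on the orthogonal side to set up the back-lift inequality.

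\textbf{What the paper actually does.} The argument is entirely on the $\Mp(2n)$ side and never touches cuspidality of $\theta^{abs}_\psi(\sigma)$. One has the trivial chain $m_{cusp}(\sigma) \le m_{disc}(\sigma) \le m(\sigma)$, so it suffices to show every automorphic realization $\sigma \hookrightarrow \tilde{\mathcal{A}}$ lands in the cuspidal spectrum. Suppose not. By Langlands' theorem on cuspidal support, $\sigma$ embeds in $\mathrm{Ind}_P^{\Mp(2n)} \rho$ for some cuspidal $\rho = \tilde\tau_1 \boxtimes \cdots \boxtimes \tilde\tau_r \boxtimes \sigma_0$ on a proper Levi $\tilde{\GL}(k_1) \times_{\mu_2} \cdots \times_{\mu_2} \tilde{\GL}(k_r) \times_{\mu_2} \Mp(2n_0)$. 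The weak lift of $\sigma$ to $\GL(2n)$ then has the shape $\boxplus_i (\tau_i \oplus \tau_i^\vee) \boxplus \Psi_0$. But the near-equivalence class of $\sigma$ is governed by $\Psi = \boxplus_i \Pi_i$, whose weak lift to $\GL(2n)$ is a multiplicity-free sum of symplectic-type cuspidal representations. The displayed shape cannot match this (each $\tau_i \oplus \tau_i^\vee$ either has multiplicity two or is not of symplectic type), a contradiction. Hence $m_{cusp}(\sigma) = m(\sigma)$, and then J.S.~Li's inequality $m_{disc}(\sigma) \le m_{disc}(\theta^{abs}_\psi(\sigma)) \le m(\theta^{abs}_\psi(\sigma)) \le m(\sigma)$ squeezes to give $m_{disc}(\sigma) = m_{disc}(\theta^{abs}_\psi(\sigma))$ and thus $\theta^{abs}(\tilde{\mathcal{A}}_{\Psi,\psi}) = \mathcal{A}_{\Psi + S_{2r}}$.

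In short: you do not need any cuspidality statement on the orthogonal side, nor any back-lifting; the entire content is the Langlands cuspidal-support argument on $\Mp(2n)$, after which Li's inequality does the rest.
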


\vskip 5pt
This implies that the structure of $\tilde{\mathcal{A}}_{\Psi,\psi}$ is ``the same" as that of $\mathcal{A}_{\Psi + S_{2r}}$. For example, one can transport the Arthur multiplicity formula for $\Psi + S_{2r}$  to $\Psi$. In this regard, note that
\[  \epsilon_{\Psi + S_{2r}}  = \tilde{\epsilon}_{\Psi}. \] 
This explains, from the point of view of theta correspondence, why global root numbers appear in the multiplicity formula for tempered A-parameters of $\Mp(2n)$.
 \vskip 10pt
 
 \subsection{ Proof of key proposition.} We give a sketch of the proof of Proposition \ref{P:key}. 
We know
\[  m_{cusp}(\sigma)  \leq m_{disc}(\sigma)  \leq m(\sigma). \]
Hence, we need to show that any $\sigma \hookrightarrow \tilde{\mathcal{A}}$ has image in the cuspidal spectrum $\tilde{\mathcal{A}}_{cusp}$. 
\vskip 10pt
 
Suppose not. Then we note the following:
\vskip 5pt
\begin{itemize}
\item The near equivalence class of  $\sigma$ (given by $\Psi$)  has weak lift to $\GL(2n)$ of the form
\[  \BIGboxplus_i  \Pi_i \]
which is a multiplicity-free sum of cuspidal representations of symplectic type.
\vskip 5pt

\item If $\sigma$ is not not cuspidal, then by a basic theorem of Langlands \cite{langlands1},  
\[  \sigma \subset {\rm Ind}_P^{\Mp(2n)}  \rho, \]
 with $\rho$ cuspidal and contained in the cuspidal support of $\sigma$. If 
\[  M = \tilde{\GL}(k_1) \times_{\mu_2} ......\times_{\mu_2} \tilde{\GL}(k_r)  \times_{\mu_2} \Mp(2n_0), \]
then
\[  \rho  =  \tilde{\tau}_1 \boxtimes ....\boxtimes \tilde{\tau}_r    \boxtimes \sigma_0. \] 
Then $\sigma$ has weak lift to $\GL(2n)$ of the form
\[  \BIGboxplus_i  (\tau_i  \oplus \tau_i^{\vee})  \boxplus \Psi_0  \]
\end{itemize}
\vskip 5pt

 \noindent Since this is not a multiplicity-free sum of symplectic-type cuspidal representations, we obtain  the desired contradiction.

 \subsection{Local packets}
There is only one remaining issue for Theorem \ref{T:B}:  to show that the local packets of $\Mp(2n)$ inherited from $\Psi + S_{2r}$ on $\SO(2n+2r+1)$ is equal to the local L-packet defined by the local Shimura correspondence of Corollary \ref{C:2.2}, i.e. do we have
\[  \tilde{\mathcal{A}}_{\Psi_v, \psi_v} :=  \theta_{\psi_v}(\mathcal{A}_{\Psi^r_v})  = \tilde{\Pi}_{\Psi_v}?  \]
Also, is the labeling by ${\rm Irr} (S_{\Psi_v})$ the same on both sides? 
\vskip 10pt
 
 These are  purely local questions, but is the most difficult and intricate part of  \cite{GI}. The proof uses local results of Moeglin \cite{m2,m3} on explicating the local A-packets of $\SO(2n+1)$ and global arguments involving the global multiplicity formula of Arthur \cite{Art1}  for all inner forms of $\SO(2n+1)$.

\vskip 10pt

\section{\bf Endoscopy and Character Identities for \texorpdfstring{$\Mp(2n)$}{Mp(2n)}}

In this section, we will review the theory of endoscopy for $\Mp(2n)$ developed by the second author. We shall state some conjectures concerning the endoscopic classification of irreducible genuine representations and the automorphic discrete spectrum of $\Mp(2n)$ in the style of Arthur's endoscopic classification for the symplectic and orthogonal groups. 
We note that the second author has stabilised the elliptic part of the invariant trace formula for $\Mp(2n)$. This elliptic stable trace formula will be used to compare the classification provided by the theta correspondence described in \S 2 with the expectations from the endoscopic classification.
\vskip 5pt

 \subsection{Basic formalism}
In order to facilitate the use of the trace formula, we opt to modify the metaplectic group as follows. We  fix a local field $k$ with $\mathrm{char}(k)=0$ and an additive character $\psi$ of $k$.

\begin{itemize}
	\item We enlarge the twofold covering of $\Sp(2n,k)$ to a covering with kernel $\mu_8 \subset \C^\times$, say by pushing-out via $\mu_2 \hookrightarrow \mu_8$. This is natural since Weil's metaplectic covering of $\Sp(2n,k)$ is originally a covering with kernel $\C^\times$ or $U(1)$, and the Schr\"{o}dinger model reduces it to an eightfold covering. The resulting covering group is still denoted by $\Mp(2n,k)$. This procedure leaves the genuine dual $\widetilde{\Irr}(\Mp(2n,k))$ intact.
	\item Once we work in the eightfold $\Mp(2n, k)$, there is a canonically defined element in the fiber over $-1 \in \Sp(2n, k)$, which we denote by the same symbol $-1$. Decompose the Weil representation $\omega_\psi$ into even and odd pieces, namely $\omega^+_\psi \oplus \omega^-_\psi$. Then $-1 \in \Mp(2n,k)$ is characterized by 
		\[ \omega_\psi^\pm(-1) = \pm {\rm id}. \]
		Consequently $(-1)^2 = 1 $ in $\Mp(2n,k)$. Furthermore, the center of $\Mp(2n,k)$ is generated by $-1$ and $\mu_8$.
	\item For any parabolic subgroup $P \subset \Sp(2n)$ with unipotent radical $U$, there exists a $P(k)$-equivariant section $U(k) \to \Mp(2n,k)$ of the metaplectic covering. In fact, this holds for any covering group by \cite[Appendix I]{mw}. This allows us to define parabolic induction and Jacquet functors.
	\item Recall that the Levi subgroups of $\Sp(2n,k)$ are of the form $M = \prod_i \GL(n_i, k) \times \Sp(2m, k)$ where $\sum_i n_i + m = n$. In the eightfold covering $\Mp(2n,k)$, the preimage $\tilde{M}$ of $M(k)$ splits into
		\[ \tilde{M} = \prod_i \GL(n_i, k) \times \Mp(2m, k) \]
		which follows from an inspection of the Schr\"{o}dinger models. For detailed characterizations of these splittings, we refer to \cite{wwli1,wwli2}.
\end{itemize}
Upon recalling the philosophy of cusp forms, we see an obvious inductive structure in the study of $\Mp(2n, k)$ that is particularly suited to the trace formula. There is a routine procedure to pass between the eightfold covering and the twofold one, but we will not delve into this approach here.

 \vskip 5pt

\subsection{\bf Endoscopy.} 
As explained in \S 2, the genuine representation theory of $\Mp(2n,k)$ is intimately connected with the representation theory of $\SO(V_n)$, where $V_n$ stands for a quadratic $k$-space of dimension $2n+1$ and discriminant $1$. We have defined in \S 2 the dual group of $\Mp(2n,k)$ to be that of $\SO(V_n)$, namely $\Sp(2n, \C)$ with trivial Galois action.
In this manner, we obtain the notion of $L$-parameters $\phi: WD_k \to \Sp(2n,\C)$, $A$-parameters $WD_k \times \SL(2,\C) \to \Sp(2n,\C)$, and the centralizer groups $C_\phi = Z_{\Sp(2n,\C)}(\mathrm{Im}(\phi))$, etc. 

The theory of endoscopy for $\Mp(2n,k)$ is developed in \cite{wwli1}, based on earlier works of Adams \cite{adams} and Renard \cite{renard} in the real case. We summarize this theory as follows.
\begin{enumerate}
	\item The elliptic endoscopic data are in bijection with pairs $(n',n'')$ of non-negative integers such that $n'+n''=n$, up to equivalence. The non-elliptic endoscopic data are defined as elliptic endoscopic data of Levi subgroups, the $\GL$-components playing no role here.
	\item The correspondence of semisimple conjugacy classes and the transfer factor $\Delta(\gamma, \tilde{\delta})$ are defined for every endoscopic datum. The transfer of orbital integrals and the fundamental lemma for units have also been established in \cite{wwli1}. As pointed out by Caihua Luo, we take this opportunity to indicate an obvious mistake in \cite[D\'{e}finition 5.9]{wwli1}: the $a''$ there should be the parameter for $\delta''$, not for $\gamma''$.
\end{enumerate}

Two questions arise:
\vskip 5pt

\noindent (1) In what aspect does $\Mp(2n,k)$ ``look like'' $\SO(2n+1,k)$? 
\vskip 5pt

\noindent  (2) What are the key differences between $\Mp(2n,k)$ and $\SO(2n+1,k)$? In other words, what is the ``metaplecticness'' of $\Mp(2n,k)$? 

\vskip 5pt

Let us turn to $\SO(2n+1)$ first.

\emph{Caution}: hereafter, we use $\pi$ to denote genuine representations of $\Mp(2n,k)$, and use $\tau$ to denote representations of $\SO(2n+1,k)$.

\subsection{Review of Vogan packets for $\SO(2n+1)$}
Set $H = \SO(2n+1)$, a split semisimple $k$-group with Langlands dual group $H^\vee = \Sp(2n, \C)$. For simplicity we only consider the tempered dual $\Irr_{\rm temp}(H)$ of $H(k)$. The results below are due to Arthur \cite{Art1} in their complete generality.

The local Langlands correspondence gives a decomposition
\[ \Irr_{\rm temp}(H) = \bigsqcup_{\phi \in \Phi_{\rm bdd}(H)} \Pi_\phi \]
where $\Phi_{\mathrm{bdd}}(H)$ denotes the set of equivalence classes of tempered $L$-parameters of $H$, and $\Pi_\phi$ are the corresponding $L$-packets. To explain the notation, note that such $\phi$ are required to have bounded image in $\Sp(2n,\C)$.

For any complex group $A$, we write $\pi_1(A, 1)$ for the group of connected components of $A$. The internal structure of each $\Pi_\phi$ is controlled by the groups
\begin{align*}
	C_\phi & := Z_{H^\vee}(\text{Im}(\phi)), \\
	\bar{C}_\phi & := C_\phi/ Z(H^\vee), \\
	\bar{S}_\phi & := \pi_0(\bar{C}_\phi, 1).
\end{align*}
Also write $C_{\phi, \mathrm{ss}}$, $\bar{C}_{\phi, \mathrm{ss}}$ to denote the semi-simple loci of these complex affine groups. Observe that $\bar{S}_\phi$ is a finite product of $\{\pm 1\}$. The local Langlands correspondence comes with an injection
\begin{align*}
	\Pi_\phi & \hookrightarrow \Irr(\bar{S}_\phi) \\
	\tau & \mapsto \langle \cdot, \tau \rangle
\end{align*}
that maps generic $\tau$ to the trivial character, and is bijective for non-archimedean $k$.

Up to equivalence, the elliptic endoscopic data of $H$ are in bijection with pairs of non-negative integers $(n',n'')$ with $n'+n''=n$, identifying $(n',n'')$ and $(n'',n')$; the corresponding endoscopic group is $H^! := \SO(2n'+1) \times \SO(2n''+1)$. The transfer of orbital integrals, fundamental lemma, etc. are nowadays well-known for $H$.
\begin{rem}
	Although $H$ and $\Mp(2n,k)$ share the same dual group $\Sp(2n, \C)$, the elliptic endoscopic data of $H$ admit more symmetries. This can be summarized by the following principle: when working with $\Mp(2n,k)$, one should disregard the symmetries coming from $\pm 1 \in \Sp(2n,\C)$.
\end{rem}

Given a tempered $L$-parameter $\phi: WD_k \to H^\vee$ and $s \in \bar{C}_{\phi, \mathrm{ss}}$, we have
\begin{itemize}
	\item an endoscopic datum (not always elliptic) determined by $s$, whose endoscopic group $H^!$ satisfies $(H^!)^\vee = Z_{H^\vee}(s)$;
	\item a factorization of $L$-parameter
		\[ \phi = WD_k \xrightarrow{\phi^!} (H^!)^\vee \hookrightarrow \check{H}, \quad \text{with $\phi^!$ tempered}; \]
	\item the Whittaker-normalized transfer $f \leadsto f^!$ of test functions; here we fix Haar measures of $H(k)$ and $H^!(k)$.
\end{itemize}

\begin{defn}
	Let $f \in C^\infty_c(H(k))$ and $\tau$ be an admissible representation of finite length of $H(k)$. We follow Arthur's notation to define
	\[ f(\tau) := \mathrm{tr}\left( \tau(f) \right). \]
	The same notation pertains to $H^!(k)$ and $\Mp(2n,k)$, and so forth.
\end{defn}

The local Langlands correspondence (LLC) holds for $H^!$. Define
\[ f^!(\phi, s) = f^!(\phi^!) = \sum_{\tau \in \Pi_{\phi^!}} f^!(\tau). \]
This is the stable character attached to $\phi^!$ evaluated against $f^!$; when $s=1$ one obtains the stable character associated to the packet $\Pi_\phi$. We are ready to state the endoscopic character relation for $H$.
\begin{thm}
	For all  $(\phi, s)$ as above,
	\[ f^!(\phi, s) = \sum_{\tau \in \Pi_\phi} \langle s, \tau \rangle f(\tau). \]
	In particular, $f^!(\phi,s)$ depends only on the image of $s$ in $\bar{S}_\phi$.
\end{thm}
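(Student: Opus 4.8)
The plan is to follow Arthur's strategy: deduce the local character relation from the stabilization of trace formulas, by an induction on $n$ that runs in parallel with the construction of the packets $\Pi_\phi$, the multiplicity formula for the discrete automorphic spectrum of $\SO(2n+1)$, and the local intertwining relation. The base case $n=0$, together with all the $\GL$-factors that will intervene, is governed by the local Langlands correspondence for $\GL$, which we take as known.

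First I would reduce to the case of discrete (square-integrable) $\phi$. A general tempered parameter factors as $WD_k \to M^\vee \hookrightarrow H^\vee$ through the dual of a Levi $M = \GL(n_1)\times\cdots\times\GL(n_r)\times\SO(2m+1)$, with the resulting $\phi_M$ discrete for $M$; correspondingly $\Pi_\phi$ consists of the parabolic inductions $\Ind_P^H \tau_M$ for $\tau_M \in \Pi_{\phi_M}$, which are irreducible by Harish-Chandra in the tempered range. Since the Whittaker-normalized transfer $f \mapsto f^!$ and the formation of stable characters are both compatible with parabolic induction and descent, and since the component-group map $\bar S_{\phi_M} \to \bar S_\phi$ is exactly what the local intertwining relation controls, the identity for $\phi$ would follow from that for $\phi_M$ on the smaller group $M$. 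This leaves the case of discrete $\phi$.

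For discrete $\phi$ the heart of the argument is to globalize and compare. I would choose a number field $\dot F$ with a place $v_0$ at which $\dot F_{v_0} \cong k$, together with a global discrete parameter $\dot\phi$ of $\SO(2n+1)$ over $\dot F$ --- equivalently a self-dual cuspidal automorphic representation of $\GL(2n)/\dot F$ of symplectic type --- with $\dot\phi_{v_0} = \phi$ and with the remaining local components either unramified or of a shape for which the character relation is already available (for instance a singleton packet at one auxiliary place); this is possible because $\phi$ is discrete. One then feeds $\dot\phi$ into two stabilized trace formulas: the stabilization of the twisted trace formula of $\GL(2n)$ relative to the standard outer automorphism $\theta$, whose $\theta$-stable discrete spectrum is completely understood, which pins down the $\dot\phi$-isotypic part of the \emph{stable} trace formula of $\SO(2n+1)$; and the stabilization of the ordinary invariant trace formula of $\SO(2n+1)$, which expresses the $\dot\phi$-part of the discrete automorphic spectrum through that stable distribution and the stable distributions of the elliptic endoscopic groups $\SO(2n'+1)\times\SO(2n''+1)$, the latter supplied by the induction hypothesis since $n',n''<n$. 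Matching the two descriptions yields a \emph{global} identity relating $\sum_{\dot s}\dot f^!(\dot\phi,\dot s)$ to $\sum_{\dot\pi} m(\dot\pi)\,\dot f(\dot\pi)$; factoring over the places of $\dot F$, substituting the already-known local identities at $v \neq v_0$, and invoking linear independence of characters at $v_0$ then isolates the asserted relation at $v_0 = k$. All the normalizations --- in particular that a generic member of $\Pi_\phi$ carries the trivial character of $\bar S_\phi$ --- are forced by the Whittaker-normalized transfer factor together with Shahidi's theory of local coefficients. The last sentence of the statement is then automatic: the right-hand side of the relation is a $\Z$-combination of the characters $\langle\cdot,\tau\rangle$ of $\bar S_\phi$ evaluated at $s$, hence depends only on the image of $s$ in $\bar S_\phi = \pi_0(\bar C_\phi,1)$.

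The hard part, as in Arthur's book, is that none of these steps is really standalone: the discrete-parameter case, the local intertwining relation, the multiplicity formula, and the existence of the packets are entangled and must be carried through a single delicate multi-layered induction. Beyond that, the globalizations have to be arranged with very tight control of ramification, and tracking the Whittaker data through twisted transfer in order to pin down every sign --- already subtle for odd orthogonal groups and more so for even orthogonal (and later metaplectic) groups --- is where most of the effort goes.
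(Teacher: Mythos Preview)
Your sketch is a fair high-level outline of Arthur's argument in \cite{Art1}, but note that the paper does not itself prove this theorem: it is stated as background, attributed to Arthur (``The results below are due to Arthur \cite{Art1} in their complete generality''), and no proof is given. So there is nothing in the paper to compare your proposal against beyond the citation; what you have written is essentially a summary of the strategy behind that citation, which is appropriate but goes further than the paper does.

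One small correction to your sketch: the reduction from general tempered $\phi$ to discrete $\phi_M$ is not as clean as you suggest. The induced representations $\Ind_P^H \tau_M$ need not be irreducible (this is the whole point of $R$-groups), and the passage from $\bar S_{\phi_M}$ to $\bar S_\phi$ is mediated precisely by the local intertwining relation, which is itself one of the statements proved in the same induction. So you cannot first reduce to the discrete case and then invoke the intertwining relation separately; the two are intertwined in Arthur's induction, as you yourself acknowledge in your final paragraph. Your description is otherwise accurate in spirit.
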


Now turn to \emph{Vogan packets}. Define $S_\phi := \pi_0(C_\phi, 1)$, which is still isomorphic to a direct sum of copies of $\pm 1$. Then one can define larger packets $\Pi_\phi$ to obtain a decomposition
\[ \bigsqcup_{V_n} \Irr_{\rm temp}(\SO(V_n)) = \bigsqcup_{\phi \in \Phi_{\rm bdd}(H)} \Pi_\phi \]
where $V_n$ ranges over the $2n+1$-dimensional quadratic $k$-spaces of discriminant $1$, such that for each $\phi$ there is now a \emph{bijection}
\begin{align*}
	\Pi_\phi & \xrightarrow{1:1} \Irr(S_\phi) \\
	\tau & \longmapsto \langle \cdot, \tau \rangle
\end{align*}
that maps generic $\tau$ to the trivial character. We refer to \cite[\S\S 9--10]{ggp} for a concise introduction.
\vskip 5pt

Summing up, we obtain a simple, uniform LLC at the cost of considering all $\SO(V_n)$ (equivalently, all pure inner forms of $H$) at once. In view of the recipe via the theta correspondence $\theta_\psi$ described in Theorem \ref{T:2.1}, the Vogan packets of $H := \SO(2n+1)$ are what one should match with the required $L$-packets of $\tilde{G} := \Mp(2n,k)$. This is also compatible with our earlier philosophy that the symmetries from $\{\pm 1\} = Z(H^\vee)$ should be disregarded for $\Mp(2n,k)$, which amounts to replacing $\bar{S}_\phi$ by $S_\phi$. We want to understand these phenomena via endoscopy, in terms of transfer of orbital integrals and character relations for $\Mp(2n,k)$.
\vskip 10pt

\subsection{Desiderata for LLC of $\Mp(2n,k)$}
In this subsection, we describe some desiderata of the LLC for $\Mp(2n,k)$, especially with regards to the local endoscopic character identities.
\vskip 5pt

Put $G := \Sp(2n)$ and $\tilde{G} := \Mp(2n,k) \twoheadrightarrow G(k)$. To begin with, we describe the tempered $L$-parameters $\phi$ for $\tilde{G} := \Mp(2n,k)$ in terms of linear algebra. The set of equivalence classes of tempered $L$-parameters will be denoted by $\Phi_{\rm bdd}(\tilde{G})$; it is the same as $\Phi_{\rm bdd}(H)$, where $H := \SO(2n+1)$ as usual.

Regard a tempered $L$-parameter $\phi$ as a symplectic representation of $WD_k$ with underlying space $V_\phi$ of dimension $2n$. As $\phi$ has bounded image, one can decompose $(\phi, V_\phi)$ into a sum of irreducible unitarizable representations
\[ \phi = \sum_{i \in I^+_\phi} \ell_i \phi_i \boxplus \sum_{i \in I^-_\phi} \ell_i \phi_i \boxplus \sum_{i \in J_\phi} \ell_i(\phi_i \boxplus \check{\phi}_i) ,\]
where $\ell_i$ are non-negative integers, $\phi_i \mapsto \check{\phi}_i$ is the contragredient, and $I^\pm_\phi$, $J_\phi$ are indexing sets for irreducible representations such that
\begin{itemize}
	\item $I^+_\phi$ consists of symplectic $\phi_i$;
	\item $I^-_\phi$ consists of orthogonal $\phi_i$, and the corresponding $\ell_i$ are all even;
	\item $J_\phi$ consists of orbits $\{\phi_i, \check{\phi}_i \}$ with $\phi_i \not\simeq \check{\phi}_i$.
\end{itemize}
Hence
\begin{align*}
	C_\phi & = \prod_{i \in I^+_\phi} \OO(\ell_i, \C) \times \prod_{i \in I^-_\phi} \Sp(\ell_i, \C) \times \prod_{i \in J_\phi} \GL(n_i, \C), \\
	S_\phi & = \{\pm 1\}^{I^+_\phi}.
\end{align*}
The notation is the same as the case for $H := \SO(2n+1)$ as $\tilde{G}$ and $H$ share the same dual group. The tempered $L$-parameter $\phi$ is discrete if and only $I^-_\phi = J_\phi = \varnothing$ and $\ell_i \leq 1$ for all $i$. The discrete parameters are expected to depict the genuine discrete series of $\tilde{G}$. Also notice that $-1 \in \Sp(2n,\C)$ always centralizes $\phi$ and corresponds to $(-1, \cdots, -1) \in C_\phi$ under the identification above.

As explained in \S 1, the same recipe also works in the global case.

Given an elliptic endoscopic datum $(n',n'')$ for $\tilde{G}$, denote by $G^! = \SO(2n'+1) \times \SO(2n''+1)$ the corresponding endoscopic group. For any $C^\infty_c$ test function $f$ on $\tilde{G}$ that is \emph{anti-genuine}, i.e. $f(z\tilde{x})=z^{-1}f(\tilde{x})$ for all $z \in \ker(\tilde{G} \to G(k)) \subset \C^\times$, we have a transfer $f^! \in C^\infty_c(G^!(k))$ defined by matching orbital integrals. The function $f^!$ is not unique, but its stable orbital integrals and its values on stable characters are determined by $f$. The same constructions generalize to non-elliptic endoscopic data by passing to Levi subgroups; see \cite[\S\S 3.3--3.4]{wwli4} for details.

Denote by $\widetilde{\Irr}_{\rm temp}(\tilde{G})$ the tempered genuine dual of $\tilde{G}$. By \cite[\S 6.3]{wwli4}, given $\phi \in \Phi_{\rm bdd}(\tilde{G})$ we have
\[ \left[ f^!(\phi, s) := f^!(\phi^!) \right] = \sum_{\pi \in \widetilde{\Irr}_{\rm temp}(\tilde{G})} \Delta(\phi^!, \pi) f(\pi), \quad s \in C_{\phi, \mathrm{ss}} \]
where
\begin{itemize}
	\item $f$ is an anti-genuine $C^\infty_c$ test function on $\tilde{G}$;
	\item the pair $(\phi, s)$ determines an endoscopic datum for $\tilde{G}$ (not always elliptic), with endoscopic group $G^!$, together with a tempered $L$-parameter $\phi^!$ for $G^!$, cf. the case for $\SO(2n+1)$;
	\item $f \leadsto f^!$ is the transfer for the aforementioned endoscopic datum.
\end{itemize}
The right-hand side is a virtual character with coefficients $\Delta(\phi^!, \pi)$ to be determined.

\begin{hypo}\label{hyp:Mp-LLC-approx}
	In order to proceed, we need a first approximation of the tempered local Langlands correspondence for $\tilde{G}$.
	\begin{itemize}
		\item Assume that $\widetilde{\Irr}_{\rm temp}(\tilde{G}) = \bigsqcup_{\phi} \Pi_\phi$, where $\phi$ ranges over $\Phi_{\rm bdd}(\tilde{G})$ and $\Pi_\phi$ are nonempty finite sets, called the $L$-packets for $\tilde{G}$.
		\item Furthermore, assume that the genuine discrete series are depicted by discrete $\phi$.
		\item For all pairs $(\phi, s)$ above and all $\pi \in \widetilde{\Irr}_{\rm temp}(\tilde{G})$, we have
			\[ \pi \in \Pi_\phi \iff \Delta(\phi^!, \pi) \neq 0; \]
			in other words, the virtual character $f \mapsto f^!(\phi, s)$ involves only the members from $\Pi_\phi$, and the $L$-packets are determined by the character relation.
	\end{itemize}
	Under these postulates, the nonzero coefficients in the character relation can be rewritten as
	\[ \Delta(\phi^!, \pi) = \langle s, \pi \rangle, \]
	where $\pi \in \Pi_\phi$. One might be tempted to impose all the usual properties of $\langle s, \pi \rangle$ to the metaplectic case. Some of them turn out to be false, and we shall only impose the following minimal requirements.
	\begin{itemize}
		\item When $\phi \in \Phi_{\mathrm{bdd}}(\tilde{G})$, we expect $\langle 1, \pi \rangle \in \{0,1\}$, i.e. $f^!(\phi, 1)$ yields the ``stable character'' on $\tilde{G}$.
		\item $\langle s, \pi \rangle$ depends only on the conjugacy class of $s$ in $C_{\phi, \mathrm{ss}}$.
	\end{itemize}
\end{hypo}

By dualizing the transfer of test functions, we can lift stable distributions on $G^!(k)$ to invariant genuine distributions on $\tilde{G}$. Unwinding definitions, one sees that the stable character of $G^!(k)$ indexed by $\phi^!$ lifts to the genuine invariant distribution $f \mapsto f^!(\phi^!)$, which is a virtual character with coefficients $\Delta(\phi^!, \pi) = \langle s, \pi \rangle$.

Translation by $-1 \in Z(\tilde{G})$ induces an automorphism of the space of invariant genuine distributions. Here comes a key property.
\begin{thm}
	Given an elliptic endoscopic datum $(n',n'')$ for $\tilde{G}$, denote by $\mathcal{T}_{(n',n'')}$ the dual of transfer. Then
	\[ (\text{translation by } -1) \circ \mathcal{T}_{(n',n'')} = \mathcal{T}_{(n'',n')} \circ \mathrm{swap}_* \]
	where $\mathrm{swap}: \SO(2n'+1) \times \SO(2n''+1) \to \SO(2n''+1) \times \SO(2n'+1)$ is the obvious isomorphism.
\end{thm}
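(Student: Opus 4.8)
The plan is to descend the asserted identity of operators on invariant genuine distributions first to an identity of stable orbital integrals, and then to a single identity between transfer factors. Unwinding the definition of the dual of transfer, for a stable distribution $S$ on $\SO(2n'+1,k)\times\SO(2n''+1,k)$ and an anti-genuine test function $f$ on $\tilde{G}$ one has
\[
\big((\text{translation by }-1)\circ\mathcal{T}_{(n',n'')}\big)(S)(f) = S\big((\tau_{-1}f)^{!,(n',n'')}\big),
\]
where $\tau_{-1}f(\tilde{x}) := f((-1)\tilde{x})$, using $(-1)^2=1$ in $\tilde{G}$; likewise
\[
\big(\mathcal{T}_{(n'',n')}\circ\mathrm{swap}_*\big)(S)(f) = S\big(\mathrm{swap}^*(f^{!,(n'',n')})\big),
\]
where $(\mathrm{swap}^*h)(\gamma',\gamma'') = h(\gamma'',\gamma')$. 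Since a stable distribution detects a test function only through its stable orbital integrals, it suffices to show that $(\tau_{-1}f)^{!,(n',n'')}$ and $\mathrm{swap}^*(f^{!,(n'',n')})$ have the same stable orbital integrals, for every $f$.

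I would then expand both stable orbital integrals via the defining matching identity $O^{\mathrm{st}}_{\gamma^!}(f^!) = \sum_{\tilde{\delta}}\Delta(\gamma^!,\tilde{\delta})\,O_{\tilde{\delta}}(f)$ of \cite{wwli1}. Two elementary remarks organise the comparison. First, $O_{\tilde{\delta}}(\tau_{-1}f) = O_{(-1)\tilde{\delta}}(f)$, because $-1$ is central. Second, the correspondences of semisimple classes attached to the data $(n',n'')$ and $(n'',n')$ are intertwined by multiplication by $-1$ on $\tilde{G}$, equivalently by $\delta\mapsto-\delta$ on $\Sp(2n,k)$: if $\delta$ has eigenvalue multiset $E_+\sqcup E_-$ with $|E_+|=2n'$, matching the pair $(\gamma',\gamma'')$ in which $\gamma'$ carries the nontrivial eigenvalues $E_+$ and $\gamma''$ carries $-E_-$, then $-\delta$ matches $(\gamma'',\gamma')$ under the $(n'',n')$-correspondence. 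This is the geometric shadow of the fact that the two data differ by the central element $-I\in Z(\Sp(2n,\C))$, i.e.\ by $s\mapsto -s$. Re-indexing the first sum through $\tilde{\delta}\mapsto(-1)\tilde{\delta}$ places it over the same set of classes as the second, so the whole statement collapses to the transfer-factor identity
\[
\Delta^{(n',n'')}\big((\gamma',\gamma''),(-1)\tilde{\delta}\big) = \Delta^{(n'',n')}\big((\gamma'',\gamma'),\tilde{\delta}\big)
\]
for all matching $\tilde{\delta}$.

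This remaining identity I would check directly from the explicit formula for the metaplectic transfer factor of \cite{wwli1}, which builds on Adams \cite{adams} and Renard \cite{renard}. That factor splits into a Langlands--Shelstad--type piece for $\SO(2n+1)$, governed by the semisimple element $s$ of the datum, and a genuine piece built from a Weil index $\gamma_\psi$ evaluated on eigenvalue data extracted from $\tilde{\delta}$ (tied to the $(-I)$-eigenspace of $s$, hence to one distinguished orthogonal factor). Passing from $(n',n'')$ to $(n'',n')$ replaces $s$ by $-s$, which merely re-partitions the eigenvalues of $\delta$ between the two orthogonal factors and swaps the distinguished one; passing from $\tilde{\delta}$ to $(-1)\tilde{\delta}$ evaluates the genuine piece against the extra central $-1$, producing a root of unity controlled by $\gamma_\psi(-1)$ and the parity of the relevant data. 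The substance of the identity is that these two changes cancel exactly, leaving no correction; the verification uses $(-1)^2=1$ together with the standard cocycle relations for $\gamma_\psi$.

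The main obstacle is exactly this last piece of bookkeeping: one must fix the normalisation of the metaplectic transfer factor (its Whittaker normalisation relative to $\psi$, the passage between the twofold and the $\mu_8$-cover, and the compatible Haar measures), follow the Weil indices through the central shift $\tilde{\delta}\mapsto(-1)\tilde{\delta}$, and confirm that the geometric correspondences for $(n',n'')$ and $(n'',n')$ are matched by $\delta\mapsto-\delta$ on the nose, so that the re-indexing produces no spurious sign and the final identity holds with no correction factor. Everything before that --- the reduction to stable orbital integrals and then to one transfer-factor identity --- is formal, subject only to the caveat that $f^!$ is determined by $f$ purely through its stable orbital integrals, so the argument both can be and must be conducted at that level.
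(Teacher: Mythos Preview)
The paper does not actually supply a proof of this theorem: it is stated as a ``key property'' coming from the endoscopy theory for $\Mp(2n)$ developed in \cite{wwli1} (and its spectral counterpart in \cite{wwli4}), and is used immediately to deduce Corollary~\ref{cor:s-vs-minus}. So there is no in-paper argument to compare against.

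That said, your outline is the natural and essentially correct route. The reduction from distributions to stable orbital integrals and then to a single transfer-factor identity is exactly how such statements are verified, and your observation that the class correspondences for $(n',n'')$ and $(n'',n')$ are intertwined by $\delta\mapsto -\delta$ on $\Sp(2n,k)$ is the geometric heart of the matter. The one place where you are right to flag caution is the final bookkeeping: the explicit transfer factor in \cite[D\'{e}finition~5.9]{wwli1} involves a parameter for the second orthogonal factor (indeed the paper itself notes a correction here --- the $a''$ should refer to $\delta''$, not $\gamma''$), and tracking how the genuine Weil-index piece behaves under both $\tilde{\delta}\mapsto(-1)\tilde{\delta}$ and the swap of factors is where the content lies. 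Your sketch correctly isolates this as the nontrivial step but does not carry it out; to complete the proof you would need to write down the factor explicitly and verify the cancellation, which is a finite computation in the conventions of \cite{wwli1}.
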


\begin{cor}\label{cor:s-vs-minus}
	For all $s \in C_{\phi, \mathrm{ss}}$ such that $s^2=1$, we have $\langle -s, \pi \rangle = \omega_\pi(-1) \langle s, \pi \rangle $ for all $\pi \in \widetilde{\Irr}_{\rm temp}(\tilde{G})$. Here $\omega_\pi$ stands for the central character of $\pi$.
\end{cor}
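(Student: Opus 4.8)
\section*{Proof proposal for Corollary \ref{cor:s-vs-minus}}

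The plan is to read off the corollary from the preceding theorem by evaluating the identity
\[ (\text{translation by } -1) \circ \mathcal{T}_{(n',n'')} = \mathcal{T}_{(n'',n')} \circ \mathrm{swap}_* \]
on the stable character attached to $\phi^!$, and then appealing to the linear independence of irreducible genuine characters of $\tilde{G}$.

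Concretely, I would fix $\phi \in \Phi_{\rm bdd}(\tilde{G})$ and $s \in C_{\phi,\mathrm{ss}}$ with $s^2 = 1$; note that $-s = (-1)s$ again lies in $C_{\phi,\mathrm{ss}}$, since $-1 \in Z(\Sp(2n,\C)) \subset C_\phi$. Viewing $\phi$ as a $2n$-dimensional symplectic representation of $WD_k$, the element $s$ splits $V_\phi$ into its $(\pm 1)$-eigenspaces, which are symplectic of dimensions $2n'$ and $2n''$ with $n'+n'' = n$; this is the endoscopic datum $(n',n'')$ determined by $s$, with endoscopic group $G^! = \SO(2n'+1) \times \SO(2n''+1)$ and factored parameter $\phi^!_s : WD_k \to (G^!)^\vee = Z_{H^\vee}(s)$. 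The key elementary observation is that $-s$ has the same eigenspace decomposition with the roles of the $+1$- and $-1$-eigenspaces interchanged, so the datum attached to $-s$ is $(n'',n')$ and, under the tautological identification $Z_{H^\vee}(-s) \cong Z_{H^\vee}(s)$ exchanging the two factors, its factored parameter is $\phi^!_{-s} = \mathrm{swap} \circ \phi^!_s$. Consequently $\mathrm{swap}_*$ carries the stable character of $G^!$ indexed by $\phi^!_s$ to the stable character of $\SO(2n''+1) \times \SO(2n'+1)$ indexed by $\phi^!_{-s}$.

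Next I would record how translation by $-1 \in Z(\tilde{G})$ acts on an invariant genuine virtual character $\sum_\pi c_\pi \Theta_\pi$: because $-1$ is central and $(-1)^2 = 1$, one checks directly that translating a test function by $-1$ replaces $\pi(f)$ by $\pi(-1)^{-1}\pi(f) = \omega_\pi(-1)\pi(f)$, so translation by $-1$ scales the coefficient of $\Theta_\pi$ by $\omega_\pi(-1)$. Now apply the theorem to the stable character of $G^!$ indexed by $\phi^!_s$. By the character relation recalled above, $\mathcal{T}_{(n',n'')}$ sends this stable character to the distribution $f \mapsto f^!(\phi,s) = \sum_\pi \langle s, \pi \rangle f(\pi)$, hence the left-hand side of the theorem becomes $f \mapsto \sum_\pi \omega_\pi(-1)\langle s, \pi \rangle f(\pi)$. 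By the previous paragraph and the definition of $\mathcal{T}_{(n'',n')}$, the right-hand side is $\mathcal{T}_{(n'',n')}$ applied to the $\phi^!_{-s}$-stable character, i.e. $f \mapsto f^!(\phi,-s) = \sum_\pi \langle -s, \pi \rangle f(\pi)$. Equating the two virtual characters and using that the distributions $f \mapsto f(\pi)$, $\pi \in \widetilde{\Irr}_{\rm temp}(\tilde{G})$, are linearly independent yields $\langle -s, \pi \rangle = \omega_\pi(-1)\langle s, \pi \rangle$ for all $\pi$ (both sides being $0$ when $\pi \notin \Pi_\phi$).

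The argument is essentially formal once the theorem is in hand; the only points requiring care are the linear-algebra bookkeeping that identifies the endoscopic datum of $-s$ with the swap of that of $s$ --- so that $\mathrm{swap}_*$ of the $\phi^!_s$-stable character is \emph{equal} to the $\phi^!_{-s}$-stable character rather than merely conjugate to it --- and the verification that translation by $-1$ scales each isotypic component $\Theta_\pi$ by $\omega_\pi(-1)$. Neither of these is a genuine obstacle, so the substance of the corollary lies entirely in the theorem.
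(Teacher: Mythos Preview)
Your proposal is correct and follows essentially the same approach as the paper's proof: the paper also notes that $s^2=1$ forces ellipticity, that replacing $s$ by $-s$ swaps the datum $(n',n'')$ and the two components of $\phi^!$, and then invokes the Theorem. You have simply spelled out the bookkeeping (the eigenspace description of the swap, the effect of translation by $-1$ on $\Theta_\pi$, and the linear-independence step) that the paper leaves implicit.
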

\begin{proof}
	The condition $s^2=1$ ensures that the corresponding endoscopic datum is elliptic. Replacing $s$ by $-s$ amounts to swapping $n'$ and $n''$; the two components of $\phi^!$ are also swapped.  Therefore the assertion follows from the Theorem.
\end{proof}
Call the factor $\omega_\pi(-1) \in \{\pm 1\}$ the \emph{central sign} of $\pi$.

Before embarking on the study of $\langle s, \pi \rangle$, we shall define an analogue via $\theta$-lifting.
\begin{defn}
	Suppose $\phi \in \Phi_{\rm bdd}(\tilde{G})$, $\pi \in \widetilde{\Irr}_{\rm temp}(\tilde{G})$ and $\pi = \theta_\psi(\tau)$ for some $\tau \in \Irr_{\rm temp}(\SO(V_n))$, via the recipe from \S 2. Define
	\[ \langle s, \pi \rangle_\Theta := \langle s, \tau \rangle, \quad s \in C_{\phi, \mathrm{ss}} \]
	where the right-hand side comes from the description of Vogan packets for $\SO(2n+1)$. Note that $\phi$ is also a tempered $L$-parameter for $\SO(V_n)$ and the group $C_\phi$ is the same.
\end{defn}

Furthermore, we have the equality for doubling $\epsilon$-factors defined by Lapid and Rallis
\[ \epsilon\left(s, \pi \times \chi, \psi\right) = \epsilon\left(s, \tau \times \chi, \psi\right) \]
for any continuous character $\chi: k^\times \to \C^\times$ by \cite{gs}. We define $\epsilon(s, \phi \times \chi, \psi)$ to be $\epsilon(s, \tau \times \chi, \psi)$ for any $\tau$ in the $L$-packet determined by $\phi: WD_k \to \Sp(2n,\C)$. Note that $\epsilon\left( \frac{1}{2}, \phi, \psi\right)$ is independent of the choice of $\psi$.

\subsection{Case study: $n=1$}
When $n=1$, the works of Waldspurger, Adams, Schultz settled all the postulates in the Hypothesis \ref{hyp:Mp-LLC-approx}; see also \cite{g3}. More precisely, $\widetilde{\Irr}_{\rm temp}(\tilde{G})$ is partitioned into packets of size $1$ or $2$, which is compatible with the recipe via $\theta_\psi$, and we have $\langle 1, \pi \rangle = 1$ for every tempered genuine $\pi$. This also determines $\langle -1, \pi \rangle$ as follows.
\begin{itemize}
	\item Suppose $\pi$ comes from $\SO(V)$ by $\theta$-lifting, $\dim V=3$ with discriminant $1$, then $\langle -1, \pi \rangle_\Theta \in \{\pm 1\}$ equals the Hasse invariant of $V$.
	\item The Corollary \ref{cor:s-vs-minus} says $\langle -1, \pi\rangle = \omega_\pi(-1)$, which equals $\langle -1, \pi \rangle_\Theta \epsilon\left( \frac{1}{2}, \phi, \psi\right)$ by \cite[Theorem 1.4]{gs}.
\end{itemize}

Suppose that there exists a non-selfdual parameter $\phi_0$ such that $\phi = \phi_0 \boxplus \check{\phi}_0$. We get
\[ C_\phi = \GL(1, \C), \quad \langle \cdot, \pi \rangle_\Theta = 1 \]
and $\langle -1, \pi \rangle = \phi_0 \circ \mathrm{rec}_k(-1)$ is not always trivial; here $\mathrm{rec}_k$ is Artin's reciprocity homomorphism for $k$. A closer look reveals that
\begin{enumerate}
	\item $\langle \cdot, \pi\rangle$ does not factor through $S_\phi$ (since $C_\phi$ is connected);
	\item in general, $\langle \cdot, \pi\rangle: C_\phi \to \C^\times$ is not a homomorphism (one can check that $\langle s, \pi \rangle = 1$ whenever $s \neq -1$);
	\item $\langle s, \pi\rangle =  \epsilon\left( \frac{1}{2}, V_\phi^{s=-1}, \psi \right) \langle s, \pi\rangle_\Theta$ for all $s \in C_{\phi, \mathrm{ss}}$.
\end{enumerate}
Here $V_\phi^{s=-1}$ is the $(-1)$-eigenspace under $s$, which is still a representation of $WD_k$. This is in clear contrast with the case of reductive groups.

\subsection{The local conjecture}
Observations in the case $n=1$ together with the multiplicity formula in the global case give some support for the following.
\begin{conj}\label{conj:coeff}
	There should be a local Langlands correspondence $\widetilde{\Irr}_{\rm temp}(\tilde{G}) = \bigsqcup_{\phi \in \Phi_{\mathrm{bdd}}(\tilde{G})} \Pi_\phi$ with character relations that satisfies the Hypothesis \ref{hyp:Mp-LLC-approx}. The assignment $\pi \mapsto \langle \cdot, \pi \rangle: C_{\phi, \mathrm{ss}} \to \C^\times$ should yield
	\begin{gather*}
		\Pi_\phi \xrightarrow{1:1} \Irr(S_\phi) \cdot \tilde{\epsilon}_\phi \quad (\text{a torsor under } \Irr(S_\phi) )
	\end{gather*}
	where
	\begin{gather*}
		\tilde{\epsilon}_\phi(s) := \epsilon\left( \frac{1}{2}, V_\phi^{s=-1} , \psi \right).
	\end{gather*}
	The factor $\tilde{\epsilon}_\phi(s)$ can be expressed more explicitly if we fix a decomposition of $\phi$ into irreducibles, as done in \S 1.
\end{conj}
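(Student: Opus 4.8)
The plan is to build the tempered local Langlands correspondence for $\tilde{G} = \Mp(2n,k)$ directly out of the theta correspondence of Theorem \ref{T:2.1}, and then to identify the endoscopic pairing $\langle\cdot,\pi\rangle$ produced by the stabilized trace formula with the explicit twist $\tilde{\epsilon}_\phi\cdot\langle\cdot,\pi\rangle_\Theta$. Concretely, for $\phi\in\Phi_{\mathrm{bdd}}(\tilde{G})$ one sets $\Pi_\phi := \tilde{\Pi}_{\phi,\psi}$, the packet transported under $\theta_\psi$ from the Vogan packet of $\bigsqcup_{V_n}\SO(V_n)$ attached to $\phi$. Since $\tilde{G}$ and $H := \SO(2n+1)$ have the same dual group and hence the same $C_\phi$ and $S_\phi$, the bijection $\Pi_\phi(\SO)\xrightarrow{1:1}\Irr(S_\phi)$ of \cite{Art1} transports to a bijection $\Pi_\phi(\tilde{G})\xrightarrow{1:1}\Irr(S_\phi)$, which defines $\langle\cdot,\pi\rangle_\Theta$ as in the Definition preceding Conjecture \ref{conj:coeff}. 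The first bullet of Hypothesis \ref{hyp:Mp-LLC-approx} (and the fact that genuine discrete series correspond to discrete $\phi$) then follows from the properties of theta in the almost-equal-rank range recorded in \cite{gs,GI}; what remains is to prove the character relations and to compute the coefficients.

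For the character relations I would invoke the stabilization of the elliptic part of the invariant trace formula for $\Mp(2n)$ \cite{wwli3} together with Luo's argument \cite{luo2}: one globalizes a test function and a tempered $\phi$, applies Li's inequalities (Theorem \ref{T:li}) and the explicit description of $\tilde{\mathcal{A}}_{\Psi,\psi}$ from Theorem \ref{T:B}, and matches the spectral side of the elliptic stable trace formula of $\tilde{G}$ against those of the endoscopic groups $G^! = \SO(2n'+1)\times\SO(2n''+1)$. This yields, for every elliptic pair $(\phi,s)$, an identity $f^!(\phi^!) = \sum_{\pi\in\Pi_\phi}\langle s,\pi\rangle\, f(\pi)$ with $\langle s,\pi\rangle\neq 0$ for every $\pi\in\Pi_\phi$, which is the remainder of Hypothesis \ref{hyp:Mp-LLC-approx}. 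One extends to non-elliptic $s$ by descent to Levi subgroups, using the splittings $\tilde{M}=\prod_i\GL(n_i,k)\times\Mp(2m,k)$ from \S 3 and compatibility of the pairing with parabolic induction; the $\GL$-factors contribute the expected Tate $\epsilon$-factors and the $\Mp(2m)$-factor is treated by induction on $n$.

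The heart of the matter is the identity $\langle s,\pi\rangle = \epsilon\!\left(\tfrac12, V_\phi^{s=-1},\psi\right)\langle s,\pi\rangle_\Theta$ for all $s\in C_{\phi,\mathrm{ss}}$. I would proceed by reduction. For $s=1$ one has $V_\phi^{s=-1}=0$, so the claim is $\langle 1,\pi\rangle=1$ for generic $\pi$, which holds because $\theta_\psi$ matches the Whittaker-generic member of $\Pi_\phi$ with the generic member of $\Pi_\phi(\SO)$ \cite{gs} and $f^!(\phi^!,1)$ is by construction the stable character. For elliptic $s$ (i.e. $s^2=1$) Corollary \ref{cor:s-vs-minus} gives $\langle -s,\pi\rangle = \omega_\pi(-1)\langle s,\pi\rangle$, while on the $\Theta$-side $\langle -s,\pi\rangle_\Theta/\langle s,\pi\rangle_\Theta$ records which $\SO(V_n)$ carries the corresponding $\tau$ via Hasse invariants; combining this with $\omega_\pi(-1)=\langle -1,\pi\rangle_\Theta\cdot\epsilon(\tfrac12,\phi,\psi)$ from \cite[Theorem 1.4]{gs} and the behaviour of $\epsilon(\tfrac12,V_\phi^{s=-1},\psi)$ under $s\mapsto -s$, one reduces the general elliptic case to $s$ ranging over generators of $S_\phi$ and, by descent, to the rank-one case $n=1$ already settled by Waldspurger, Adams and Schultz (recalled above in the case $n=1$). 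As a global consistency check, comparing the Arthur multiplicity formula for $\Psi+S_{2r}$ on $\SO(2n+2r+1)$ transported through Proposition \ref{P:key} with the metaplectic multiplicity formula whose sign is $\tilde{\epsilon}_\Psi(a_i)=\epsilon(\tfrac12,\Pi_i)$ forces precisely the local twist by $\tilde{\epsilon}_\phi$ at each place.

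The step I expect to be the main obstacle is exactly this coefficient computation, and within it the bookkeeping that identifies the doubling (Lapid--Rallis) $\epsilon$-factors attached to $\theta_\psi$ with the normalization of the transfer factor $\Delta(\gamma,\tilde{\delta})$ of \cite{wwli1,wwli4}: both sides are characterized only indirectly — the theta side through see-saw and doubling integrals, the endoscopic side through orbital integrals and the stable trace formula — so pinning down the comparison requires either a direct local computation at enough places or a delicate global argument relying on Arthur's multiplicity formula for \emph{all} pure inner forms of $\SO(2n+1)$, precisely as in the most intricate part of \cite{GI}. A secondary, more conceptual difficulty is that $\langle\cdot,\pi\rangle$ is genuinely not a homomorphism on $C_\phi$ — as the $n=1$ discussion shows, it need not even factor through $S_\phi$ — so the identity cannot be checked merely on generators; one must control $\tilde{\epsilon}_\phi(s)=\epsilon(\tfrac12,V_\phi^{s=-1},\psi)$ as a function on all of $C_{\phi,\mathrm{ss}}$, tracking which eigenspaces are symplectic and which orthogonal.
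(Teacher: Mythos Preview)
The statement is a \emph{Conjecture} in the paper, not a theorem; the paper does not prove it for general $n$. What the paper does is (i) verify it for $n=1$ in the Theorem immediately following, and (ii) report that Luo \cite{luo2} has since established Conjectures \ref{conj:coeff} and \ref{conj:coeff-refinement} for the packets of Corollary \ref{C:2.2}, by a global argument combining the elliptic stable trace formula of \cite{wwli3} with the multiplicity formula of Theorem \ref{T:B}. Your proposal is thus not being compared against a proof in the paper, but your outline does align with the strategy the paper attributes to Luo: build the packets via $\theta_\psi$, globalize, and force the coefficient identity by matching spectral expansions.

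That said, your sketch contains an internal inconsistency that is a genuine gap. In the third paragraph you propose to reduce the elliptic case ``to $s$ ranging over generators of $S_\phi$ and, by descent, to the rank-one case $n=1$''. In the final paragraph you correctly observe that $\langle\cdot,\pi\rangle$ is not a homomorphism on $C_\phi$ and need not factor through $S_\phi$, so ``the identity cannot be checked merely on generators''. These two statements cannot both stand: the reduction you propose is precisely the one you then rule out. The actual argument (as in \cite{luo2}) does not reduce to $n=1$; it runs the global comparison directly in rank $n$ and extracts the local identity place by place from the multiplicity formula, with no inductive descent on the rank for the coefficient computation. Your $s=1$ step is also incomplete: you argue $\langle 1,\pi\rangle=1$ only for the $\psi$-generic member, whereas the bijection $\Pi_\phi \to \Irr(S_\phi)\cdot\tilde\epsilon_\phi$ requires $\langle 1,\pi\rangle=1$ for \emph{every} $\pi\in\Pi_\phi$, since $\tilde\epsilon_\phi(1)=1$ and every character of $S_\phi$ is $1$ at the identity.
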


Note that
\begin{enumerate}
	\item $\tilde{\epsilon}_\phi$ does not belong to $\Irr(S_\phi)$ although their global product does;
	\item when $\phi$ is a discrete-series parameter for $\tilde{G}$, we can show $\tilde{\epsilon}_\phi \in \Irr(S_\phi)$, but this is somehow misleading;
	\item it is possible to formulate a version for $A$-packets, but we prefer to stay in the tempered setting here.
\end{enumerate}

The previous conjecture can be refined as follows.
\begin{conj}\label{conj:coeff-refinement}
	We expect that
	\[ \langle s, \pi \rangle = \langle s, \pi\rangle_\Theta \tilde{\epsilon}_\phi(s), \quad s \in C_{\phi, \mathrm{ss}} \]
	for all $\phi \in \Phi_{\mathrm{bdd}}(\tilde{G})$ and $\pi \in \Pi_\phi$.
\end{conj}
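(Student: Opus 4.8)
The plan is to take the $L$-packets $\tilde{\Pi}_{\phi,\psi}$ produced by the theta correspondence, which by Theorem~\ref{T:2.1} and Corollary~\ref{C:2.2} already realize the LLC of Hypothesis~\ref{hyp:Mp-LLC-approx}, and to verify the character identity $f^!(\phi,s) = \sum_{\pi\in\tilde{\Pi}_{\phi,\psi}}\langle s,\pi\rangle_\Theta\,\tilde{\epsilon}_\phi(s)\,f(\pi)$ head-on, using the stabilization of the elliptic part of the invariant trace formula for $\Mp(2n)$ from \cite{wwli3}, Arthur's stable trace formula \cite{Art1} for the endoscopic groups $G^! = \SO(2n'+1)\times\SO(2n''+1)$, and the global description of the discrete spectrum of $\Mp(2n)$ from \S 2. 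The argument is an induction on $n$. Two values of $s$ serve as anchors: for $s=1$ the assertion is the stability of $\tilde{\Pi}_{\phi,\psi}$, consistent since $\tilde{\epsilon}_\phi(1)=\epsilon(\frac{1}{2},0,\psi)=1$ and $\langle 1,\pi\rangle_\Theta=1$; for $s=-1\in\Sp(2n,\C)$, Corollary~\ref{cor:s-vs-minus} applied with $s=1$ gives $\langle -1,\pi\rangle=\omega_\pi(-1)$, and the computation of the central sign of a metaplectic theta lift from \cite{gs} (the $n$-variable form of \cite[Theorem~1.4]{gs}) identifies $\omega_\pi(-1)$ with $\langle -1,\pi\rangle_\Theta\,\epsilon(\frac{1}{2},\phi,\psi)=\langle -1,\pi\rangle_\Theta\,\tilde{\epsilon}_\phi(-1)$, as $V_\phi^{s=-1}=V_\phi$ when $s=-1$.

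I would first reduce to discrete $\phi$. If $\phi$ is not discrete, then every $\pi\in\tilde{\Pi}_{\phi,\psi}$ is a full parabolic induction $\Ind_{\tilde{P}}^{\tilde{G}}(\tilde{\tau}_1\boxtimes\cdots\boxtimes\tilde{\tau}_r\boxtimes\pi_0)$ with $\pi_0\in\tilde{\Pi}_{\phi_0,\psi}$ for some $\Mp(2m)$ with $m<n$; both sides of the identity are compatible with this induction — the left because endoscopic transfer commutes with parabolic descent and the $\GL$-factors are inert for the relevant stable characters, the right because theta lifting is compatible with parabolic induction — while $\tilde{\epsilon}_\phi=\tilde{\epsilon}_{\phi_0}$ because the summands $\phi_i\boxplus\check{\phi}_i$ and the even-multiplicity orthogonal $\phi_i$ contribute a trivial $\epsilon$-factor on the eigenspaces in question. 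By the inductive hypothesis the claim for $\phi_0$ is known, so one is reduced to discrete $\phi$, for which $C_{\phi,\mathrm{ss}}=S_\phi=\{\pm1\}^{I^+_\phi}$; thus every $s$ satisfies $s^2=1$ and defines an elliptic datum $(n',n'')$ with $\phi^!=(\phi',\phi'')$, where $\phi'$ acts on $V_\phi^{s=-1}$ and $\phi''$ on $V_\phi^{s=1}$.

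For such $s$ the strategy is to compare two transfers of the stable character of $\phi^!$: the metaplectic transfer $G^!(k)\leadsto\tilde{G}$ built from the transfer factor of \cite{wwli1}, and the composite of the $\SO(2n+1)$-endoscopic transfer $G^!(k)\leadsto\bigsqcup_{V_n}\SO(V_n,k)$ to the Vogan packet with the theta map $\theta_\psi$, which by the definition of $\langle\cdot,\cdot\rangle_\Theta$ and the Vogan-packet relation for $\SO(2n+1)$ sends the stable character of $\phi^!$ to $\sum_\pi\langle s,\pi\rangle_\Theta f(\pi)$. The conjecture is then the statement that these two transfers differ by exactly the scalar $\epsilon(\frac{1}{2},\phi',\psi)=\tilde{\epsilon}_\phi(s)$. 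Geometrically this should reflect an identity of transfer factors $\Delta_{\Mp,(n',n'')}(\gamma^!,\tilde{\gamma})=\gamma_\psi(Q_{\gamma^!})\cdot\Delta_{\SO,(n',n'')}(\gamma^!,\gamma^{\SO})$ — under the matching of semisimple classes in $\Sp(2n)$ and $\SO(2n+1)$ compatible with both theories of endoscopy — the discrepancy being a Weil index of an explicit quadratic form $Q_{\gamma^!}$ attached to $\gamma^!$; one then applies the standard dictionary turning this Weil index, regarded as a function of $\gamma^!$, into the spectral $\epsilon$-factor $\epsilon(\frac{1}{2},\phi',\psi)$. To make the passage ``up to a scalar'' rigorous and fix the scalar, one feeds both transfer expansions into the stable elliptic trace formula of \cite{wwli3}, matches the resulting global spectral expansion against the multiplicity formula of \S 2 (whose $\tilde{\epsilon}_\Psi$-twist is the global boundary condition), and separates the place of interest by the usual rigidity argument, using auxiliary test functions that are unramified where the identity is the known spherical one, archimedean where endoscopy is known by Adams~\cite{adams}, Renard~\cite{renard} and \cite{wwli1}, and cuspidal or Euler--Poincar\'e at enough places to isolate the elliptic contribution.

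The main obstacle is this last step. Only the \emph{elliptic} part of the metaplectic trace formula is currently stabilized, so the global comparison must be run with test functions confined to see only elliptic terms, which complicates both the separation of places and the control of the continuous spectrum; one must also run the comparison over \emph{all} pure inner forms of $\SO(2n+1)$ simultaneously, as in the treatment of local packets in \S 2, where Arthur's multiplicity formula for every inner form and M\oe glin's explication \cite{m2,m3} of the $\SO(2n+1)$-packets enter, so that $\theta_\psi$ of Theorem~\ref{T:2.1} is genuinely available. Most delicate is the exact normalization: the transfer-factor comparison pins $\langle s,\pi\rangle/\langle s,\pi\rangle_\Theta$ down only up to a character of $S_\phi$, and identifying this character with the function $s\mapsto\epsilon(\frac{1}{2},V_\phi^{s=-1},\psi)$ — which is itself \emph{not} a character of $S_\phi$ — requires combining the $s=-1$ anchor with the global product formula for $\epsilon$-factors and the precise shape of the metaplectic multiplicity formula. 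This package is carried out in the recent preprint of Luo \cite{luo2}.
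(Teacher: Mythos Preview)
This statement is stated in the paper as a \emph{conjecture}, not a theorem: the paper supplies no proof for general $n$. What the paper does prove is the case $n=1$ (the Theorem immediately following), and that proof is purely local and case-by-case --- it checks $s=1$ trivially, handles $s=-1$ via Corollary~\ref{cor:s-vs-minus} and the central-sign formula from \cite{gs}, and for $s\neq\pm 1$ in the principal-series case invokes parabolic descent for the non-elliptic endoscopic datum directly --- rather than any global trace-formula comparison. For general $n$ the paper only reports (in \S 3.7) that the conjecture has been established in Luo's preprint \cite{luo2}, using exactly the ingredients you name: the stabilized elliptic trace formula of \cite{wwli3} together with the multiplicity formula of Theorem~\ref{T:B}. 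So your proposal is not a competing proof but a plausible reconstruction of the strategy the paper attributes to Luo, and indeed you yourself defer to \cite{luo2} at the end.

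One concrete gap in your reduction step: you assert $\tilde{\epsilon}_\phi=\tilde{\epsilon}_{\phi_0}$ after stripping off the $\GL$-blocks, on the grounds that summands $\phi_i\boxplus\check{\phi}_i$ and even-multiplicity orthogonal $\phi_i$ contribute trivially to the $\epsilon$-factor. But the paper's own $n=1$ analysis refutes this: for $\phi=\phi_0\boxplus\check{\phi}_0$ with $\phi_0$ non-selfdual, the Levi is $\GL(1)$ with trivial $\Mp$-part, yet the paper computes $\langle -1,\pi\rangle=\phi_0\circ\mathrm{rec}_k(-1)$, which is not always $1$; equivalently $\tilde{\epsilon}_\phi(-1)=\epsilon(\tfrac{1}{2},\phi_0\boxplus\check{\phi}_0,\psi)$ need not be trivial. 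So the $\GL$-contribution to $\tilde{\epsilon}_\phi$ is genuinely nontrivial at $s=-1$, and your parabolic-descent reduction cannot go through in the clean form stated. The correct bookkeeping of exactly these discrepancies is what the constants $c(u,u^\flat,\phi)$ and the local intertwining relation (Conjecture~\ref{conj:LIR}) in \S 4 are designed to encode; without that machinery the inductive step is incomplete.
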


 \begin{rem}
	As a reality check, suppose $s \in C_{\phi, \mathrm{ss}}$ and $s^2 = 1$, so that $s$ determines an elliptic endoscopic datum $(n', n'')$ together with an $L$-parameter $\phi^! = \phi' \times \phi''$ for $\SO(2n'+1) \times \SO(2n''+1)$; we have $\phi = \phi' \boxplus \phi''$. If $s$ is replaced by $-s$, the factor $\langle s, \pi\rangle_\Theta$ changes by $\epsilon(V_n)$, the Hasse invariant of $V_n$, if $\pi$ comes from $\tau \in \Irr(\SO(V_n))$. On the other hand, to $-s$ are attached to endoscopic datum $(n'',n')$ and the swapped parameter $\phi'' \times \phi'$. Hence the factor $\langle s, \pi \rangle = \Delta(\phi^!, \pi)$ also gets swapped. This might appear perplexing in ``the most symmetric case'' in which $n$ is even, $n'= \frac{n}{2} = n''$ and $\phi' = \phi_0 = \phi''$ for some $L$-parameter $\phi_0$ for $\Mp(n,k)$. Let us check Conjecture \ref{conj:coeff-refinement} in this situation.

	In that case, $-s$ is conjugate to $s$ inside $C_\phi$. Indeed, $C_\phi$ is the product of groups of the form
	\[ \OO(2\ell, \C), \quad \Sp(2\ell, \C), \quad \GL(2\ell, \C) \]
	corresponding to summands in $\phi_0$ of the form $\ell \xi$ (for symplectic, orthogonal $\xi$), or $\ell(\xi \boxplus \check{\xi})$ (for non-selfdual $\xi$) respectively. Furthermore, the components of $s$ therein are all conjugate to diagonal matrices
	\[ s^\flat = \text{diag}(\underbrace{1, \ldots, 1}_{\ell \text{ terms}}, \underbrace{-1, \ldots, -1}_{\ell \text{ terms}}). \]
	It is routine to verify that $s^\flat$ is conjugate to $-s^\flat$ in all the three cases. Hence $\langle s, \pi \rangle = \langle -s, \pi \rangle$

	Moreover we have $\langle s, \pi \rangle_\Theta = \langle -s, \pi \rangle_\Theta$; this may be checked directly by noting that $\epsilon(V_n)=1$ whenever $\Irr(\SO(V_n))$ intersects the Vogan packet attached to $\phi$. Hence our conjecture holds only when $\epsilon\left( \frac{1}{2}, V_\phi^{s=1}, \psi \right) = \epsilon\left( \frac{1}{2}, V_\phi^{s=-1}, \psi \right)$; this is indeed true since both terms equal the symplectic root number $\epsilon\left( \frac{1}{2}, \phi_0, \psi \right) \in \{ \pm 1 \}$. We are grateful to Yifeng Liu for alerting us to this issue.
\end{rem}

The second reality check is the rank-one case.
\begin{thm}
	The two conjectures above hold when $n=1$.
\end{thm}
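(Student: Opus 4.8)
The plan is to verify, for $n=1$, that the local Langlands correspondence furnished by Waldspurger--Adams--Schultz (as recalled in the case study $n=1$) satisfies Hypothesis~\ref{hyp:Mp-LLC-approx} and then to match the coefficients $\langle s, \pi\rangle$ against the formula $\langle s,\pi\rangle_\Theta\,\tilde\epsilon_\phi(s)$ of Conjectures~\ref{conj:coeff} and~\ref{conj:coeff-refinement}. First I would run through the classification of tempered $L$-parameters $\phi: WD_k \to \Sp(2,\C) = \SL(2,\C)$ by type. There are essentially three cases: (a) $\phi$ irreducible symplectic (so $\phi = \phi_i$ with $\ell_i = 1$, $I^+_\phi = \{i\}$, giving $C_\phi = \OO(1,\C) = \{\pm1\}$ and $S_\phi = \{\pm1\}$); (b) $\phi = \chi \boxplus \chi^{-1}$ with $\chi$ a quadratic character (so $\phi$ is a sum of two orthogonal characters, $I^-_\phi$ contributes, $C_\phi = \OO(2,\C)$ or $\Sp(2,\C)$ depending on normalization, $S_\phi = \{\pm1\}^2$ or smaller); and (c) $\phi = \phi_0 \boxplus \check\phi_0$ with $\phi_0$ non-selfdual, where $C_\phi = \GL(1,\C)$ is connected and $S_\phi$ is trivial. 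For each case I would identify the corresponding Vogan packet on $\bigsqcup_V \SO(V_n)$, $\dim V_n = 3$, disc $1$ --- i.e. packets for $\PGL(2)$ and its inner form $\mathrm{PD}^\times$ --- and transport it via $\theta_\psi$ to get $\Pi_\phi$ on $\Mp(2,k)$.

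Next I would check the postulates of Hypothesis~\ref{hyp:Mp-LLC-approx} term by term. That $\widetilde{\Irr}_{\rm temp}(\tilde G)$ is the disjoint union of nonempty finite $\Pi_\phi$, and that discrete series correspond to discrete $\phi$ (i.e. irreducible symplectic $\phi$, case (a)), is exactly the content of Waldspurger's classification combined with the Adams--Barbasch / Gan--Savin bijection $\theta_\psi$ of Theorem~\ref{T:2.1}; in the $n=1$ case Schultz's thesis supplies the local character identity $f^!(\phi,s) = \sum_\pi \Delta(\phi^!,\pi) f(\pi)$ on the metaplectic side, so I would cite that for the statement $\pi \in \Pi_\phi \iff \Delta(\phi^!,\pi) \neq 0$ and for $\langle 1,\pi\rangle = 1$. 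The two ``minimal requirements'' ($\langle 1,\pi\rangle \in \{0,1\}$ and conjugacy-invariance of $\langle s,\pi\rangle$) are then immediate. So the real work is the refined formula in Conjecture~\ref{conj:coeff-refinement}, namely $\langle s,\pi\rangle = \langle s,\pi\rangle_\Theta\,\tilde\epsilon_\phi(s)$ for all $s \in C_{\phi,\mathrm{ss}}$, which for $n=1$ means checking it at $s = 1$ (trivial, since $V_\phi^{s=1}$ is zero-dimensional so $\tilde\epsilon_\phi(1) = 1$ and $\langle 1,\pi\rangle_\Theta = 1$) and at $s = -1$ (the only other semisimple conjugacy class with $s^2=1$ in each $C_\phi$, and in case (c) the generic point of the connected $\GL(1,\C)$ where $\langle s,\pi\rangle = 1$ for $s \neq -1$ is noted in the text).

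The heart of the argument is therefore the identity at $s = -1$: one must show $\langle -1,\pi\rangle = \langle -1,\pi\rangle_\Theta\,\epsilon(\tfrac12, V_\phi^{s=-1},\psi)$. I would derive this from the facts already assembled in the $n=1$ case study. By Corollary~\ref{cor:s-vs-minus}, $\langle -1,\pi\rangle = \omega_\pi(-1)\langle 1,\pi\rangle = \omega_\pi(-1)$, the central sign. On the other hand the displayed computation in the case study records that $\omega_\pi(-1) = \langle -1,\pi\rangle_\Theta\,\epsilon(\tfrac12,\phi,\psi)$ via \cite[Theorem~1.4]{gs} (for $\pi = \theta_\psi(\tau)$ with $\dim V = 3$, $\langle -1,\pi\rangle_\Theta$ is the Hasse invariant of $V$, and in case (c) $\langle -1,\pi\rangle = \phi_0 \circ \mathrm{rec}_k(-1)$). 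So it remains to identify $\epsilon(\tfrac12,\phi,\psi)$ with $\epsilon(\tfrac12, V_\phi^{s=-1},\psi)$: when $s = -1$ the $(-1)$-eigenspace $V_\phi^{s=-1}$ is all of $V_\phi$, so this is literally $\epsilon(\tfrac12,\phi,\psi)$ by the definition of $\epsilon(s,\phi\times\chi,\psi)$ given just before the case study. Finally, to get the torsor statement of Conjecture~\ref{conj:coeff} I would package these coefficients: as $\pi$ ranges over $\Pi_\phi$, the map $s \mapsto \langle s,\pi\rangle$ ranges over the coset $\Irr(S_\phi)\cdot\tilde\epsilon_\phi$, which follows since $\langle\cdot,\pi\rangle_\Theta$ ranges over $\Irr(S_\phi)$ (the Vogan packet bijection for $\SO(V_n)$, quoted from \cite{ggp}) and multiplication by the fixed function $\tilde\epsilon_\phi$ is a bijection of cosets. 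I expect the main obstacle to be bookkeeping rather than depth: correctly pinning down, in each of the three parameter types, the identification of $C_\phi$, of $S_\phi$, and of the Vogan-packet labeling $\langle\cdot,\tau\rangle$ on the $\PGL(2)$ side --- in particular making sure the $\epsilon$-factor and Hasse-invariant normalizations in \cite{gs} are lined up with the conventions of \cite{ggp} --- and handling the degenerate non-selfdual case (c) where $C_\phi$ is connected so that ``$\langle\cdot,\pi\rangle$ does not factor through $S_\phi$'' yet the formula at $s=-1$ still must be checked by hand against $\phi_0 \circ \mathrm{rec}_k(-1)$.
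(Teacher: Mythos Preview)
Your treatment of $s = 1$ and $s = -1$ follows the paper's approach and is essentially correct: the central-sign argument via Corollary~\ref{cor:s-vs-minus} combined with \cite[Theorem~1.4]{gs} does establish $\langle -1,\pi\rangle = \langle -1,\pi\rangle_\Theta\,\epsilon(\tfrac12,\phi,\psi)$, and for $s=1$ everything is trivial.

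However, there is a genuine gap for $s \neq \pm 1$. The conjectures require the identity $\langle s,\pi\rangle = \langle s,\pi\rangle_\Theta\,\tilde\epsilon_\phi(s)$ for \emph{all} $s \in C_{\phi,\mathrm{ss}}$, not just those with $s^2=1$. In your case (b) (where in fact $C_\phi = \Sp(2,\C) = \SL(2,\C)$, not $\OO(2,\C)$, and $S_\phi$ is trivial, not $\{\pm 1\}^2$) and in your case (c) ($C_\phi = \GL(1,\C)$), the centralizer has many semisimple elements $s \neq \pm 1$. You do not address case (b) for such $s$ at all; for case (c) you only cite the case study's assertion that $\langle s,\pi\rangle = 1$ for $s \neq -1$, which is an observation recorded there, not a proof. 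The paper's proof supplies the missing argument: any $s \neq \pm 1$ in $\Sp(2,\C)$ has no eigenvalue equal to $\pm 1$, so $V_\phi^{s=-1} = 0$ and $\tilde\epsilon_\phi(s) = 1$; such $s$ determines the \emph{non-elliptic} endoscopic datum of $\Mp(2,k)$, namely the identity endoscopic datum of the split torus $T$, with $G^! = T$ and $\phi^!$ the character inducing $\pi$. One is then reduced to showing $\Delta(\phi^!,\pi) = 1$, which the paper deduces from \emph{parabolic descent for endoscopic transfer}. This step is the substantive content you are missing, and without it the verification for the parabolically-induced packets is incomplete.
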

\begin{proof}
	As remarked above, the requirements in Hypothesis \ref{hyp:Mp-LLC-approx} are verified. Furthermore,
	\begin{itemize}
		\item the $L$-packets $\Pi_\phi$ have size $1$ or $2$;
		\item the genuine discrete series are depicted by discrete $\phi$, and the other tempered $\pi$ are parabolically and irreducibly induced from genuine unitary characters of $\tilde{T} \simeq T(k) \times \mu_8$, where $T$ is a split torus in $G$;
		\item the $L$-packets $\Pi_\phi$ are the same as those obtained from $\theta_\psi$.
	\end{itemize}
	It remains to check $\langle s, \pi \rangle = \langle s, \pi \rangle_\Theta \tilde{\epsilon}_\phi(s)$. The case $s=1$ is trivial. We have verified previously that $\langle -1, \pi \rangle = \langle -1, \pi\rangle_\Theta \epsilon\left( \frac{1}{2}, \phi, \psi \right)$, so the case $s=-1$ also holds.

	If $\pi$ is a genuine discrete series, its $L$-parameter $\phi$ is then an irreducible symplectic representation, therefore $C_\phi = \OO(1, \C) = \{\pm 1\}$ and we are done. If $\pi$ is parabolically induced from $\tilde{T}$, then $\langle \cdot, \pi \rangle_\Theta = 1$ and $C_\phi = \SL(2,\C)$ or $\GL(1, \C)$. Suppose $s \neq \pm 1$, then $s \in \Sp(2, \C)$ cannot have eigenvalue $\pm 1$, so $\epsilon\left( \frac{1}{2}, V_\phi^{s=-1}, \psi \right) = 1$. On the other hand, such an $s$ corresponds to the non-elliptic endoscopic datum of $\Mp(2, k)$, namely the identity endoscopic datum for the the split torus $T$. In this case $G^! = T$, $\phi^! = \phi_0$, $\phi = \phi_0 \boxplus \check{\phi}_0$ and $\pi$ is parabolically induced from the genuine character of $\tilde{T}$ arising from $\phi_0$. We are reduced to show $\langle s, \pi\rangle = \Delta(\phi^!, \pi) = 1$, which follows essentially from the \emph{parabolic descent} for endoscopic transfer.
\end{proof}

\vskip 5pt

\subsection{\bf The case of general $n$.}
We describe some recent developments in the case of general $n$. As mentioned in \S 2, especially Theorem \ref{T:2.1} and Corollary \ref{C:2.2}, the theta correspondence provides an LLC for $\Mp(2n, k)$. Moreover, it was shown in \cite{gs} that almost all assertions in Hypothesis \ref{hyp:Mp-LLC-approx} hold, except for the third bullet-point there. 
In a recent preprint \cite{luo2} of Caihua Luo,   released while this paper is being prepared, Conjectures \ref{conj:coeff} and \ref{conj:coeff-refinement} were shown for the local L-packets defined by Corollary \ref{C:2.2}.
This provides a reconciliation of the endoscopic and the $\theta$-lifting descriptions for $\widetilde{\Irr}_{\rm temp}(\tilde{G})$, but is constrained only to local L-packets: it does not treat the case of nontempered local A-packets. 
The main tool used by Luo is the elliptic stable trace formula of the second author (a topic we shall turn to next) and the multiplicity formula described in Theorem \ref{T:B} and \S 2.4, so that his argument is global in nature.
The case of epipelagic $L$-packets for non-archimedean $k$ of residual characteristic $p \gg 0$ seems accessible by purely local arguments, cf. \cite{Kal15}.

\vskip 5pt

 \section{\bf Stable Trace Formula and Local Intertwining Relations}

In this section, we consider  a number field $F$ with $\A = \A_F = \prod_v F_v$. Choose an additive character $\psi = \prod_v \psi_v: F \backslash \A \to \C^\times$ and consider the global eightfold metaplectic covering $\Mp(2n, \A) \twoheadrightarrow \Sp(2n, \A)$. By using local theta lifts, we may talk about local $L$-packets for $\Mp(2n, F_v)$ at each place $v$. Global elliptic $L$- and $A$-parameters are defined as in the first part, and for each global parameter $\Psi$ one defines the global component group $S_\Psi$.

\subsection{\bf Automorphic discrete spectrum.} We recall some results from \S 2. Each parameter $\Psi$ gives rise to a near equivalence class of genuine automorphic representations. Therefore $\tilde{\mathcal{A}}$ decomposes into $\bigoplus_\Psi \tilde{\mathcal{A}}_{\Psi,\psi}$ accordingly. Suppose henceforth that $\Psi$ is a tempered parameter. At each place $v$ of $F$, one can localize $\Psi$ to a local parameter $\Psi_v: WD_k \to \Sp(2n, \C)$ by the ``seed theorem'' \cite[Theorem 1.4.2]{Art1}. We cannot assure $\Psi_v$ to be tempered at a given $v$, but the following is known:
\begin{itemize}
	\item There exists a parabolic $P$ with Levi $M$ such that $\Psi_v$ comes from a tempered parameter $\Psi_{M,v}$ for $\tilde{M}$, twisted by some $\lambda$ in the open chamber $\mathfrak{a}^+_P$.
	\item By \cite[\S 1.5]{Art1} we have $S_{\Psi_v} = S_{\Psi_{M,v}}$, and the packet $\Pi_{\Psi_v}$ consists of normalized parabolic inductions $\mathcal{I}_{\tilde{P}}(\pi_{M,\lambda})$, where $\pi_M$ ranges over the tempered packet of $\tilde{M}$ attached to $\Psi_{M,v}$.
	\item It is expected that $\mathcal{I}_{\tilde{P}}(\pi_{M,\lambda})$ is irreducible and unitarizable, see \cite[Conjecture 8.3.1]{Art1}.
\end{itemize}

For each place $v$, let $\eta_v \in \Irr(S_{\Psi_v})$  correspond via Corollary \ref{C:2.2} (i.e. local theta lifting) to a genuine irreducible representation $\pi_v \in \Pi_{\Psi_v}$, which means
\[ \langle s, \pi_v \rangle_\Theta = \eta_v(s), \quad s \in S_{\Psi_v}. \]
This makes sense even when $\Psi_v$ is non-tempered by the foregoing discussion. Furthermore, suppose $\pi_v$ is unramified for almost all $v$. Then $\eta := \bigotimes_v \eta_v$ is a well-defined character of $S_{\Psi, \A} := \prod_v S_{\Psi_v}$. There is a diagonal homomorphism $\Delta: S_\Psi \to S_{\Psi, \A}$. Consider the abstract representation $\pi = \bigotimes_v \pi_v$ of $\Mp(2n,\A)$ attached to $\eta$. The multiplicity formula stated in \S 2.4 asserts that the multiplicity of $\pi$ in $\tilde{\mathcal{A}}_{\Psi, \psi}$ equals
\[ m_\eta = \begin{cases}
	1, & \text{if } \Delta^*(\eta) = \tilde{\epsilon}_\Psi \\
	0, & \text{otherwise},
\end{cases} \]
where $\tilde{\epsilon}_\Psi(s) := \epsilon\left( \frac{1}{2}, V_\Psi^{s=-1}, \psi \right)$ (for $s \in C_{\Psi, \mathrm{ss}}$) is defined as in the local case. The global avatar has an agreeable property: $\tilde{\epsilon}_\Psi$ descends to a character of $S_\Psi$.

The multiplicity formula is in clear contrast with the case of $\SO(2n+1)$, in which $\tilde{\epsilon}_\Psi$ is replaced by the trivial character. How to understand this in terms of trace formula? Heuristics for Arthur's multiplicity formula for general reductive groups can be found in \cite[\S 4.8]{Art1}. The point is that the characters $\eta_v$ intervene through their appearance as coefficients in the local character relation. If this is to be done for $\Mp(2n,\A)$, one should work with $\langle \cdot, \pi_v \rangle$ instead of $\eta_v = \langle \cdot, \pi_v \rangle_\Theta$. Conjecture \ref{conj:coeff-refinement} says that they differ by a factor $\tilde{\epsilon}_{\Psi_v}: C_{\Psi_v, \mathrm{ss}} \to \C^\times$ at least when $\pi_v$ is tempered. Their global product $\tilde{\epsilon}_\Psi = \prod_v \tilde{\epsilon}_{\Psi_v}$ is precisely what accounts for the deviation of our multiplicity formula from Arthur's; recall that $\tilde{\epsilon}_\Psi$ takes value in $\pm 1$.
\vskip 5pt

\subsection{\bf Stable Trace Formula.}
A natural strategy to classify the automorphic discrete spectrum of $\Mp(2n)$ is to exploit the (conjectural) stable trace formula for $\Mp(2n)$ and  adapt the \emph{standard model} in \cite[Chapter 4]{Art1} to $\Mp(2n,\A)$. The conjectural stable trace formula for $\Mp(2n)$ should read:
\[  I_{disc}^{\Mp(2n)}   =  \sum_{n'+n'' =n} \iota(n',n'')\cdot  \mathcal{T}_{(n',n'')}  \left( S_{disc}^{H_{n',n''} } \right), \] 
where $\mathcal{T}_{(n', n''})$ is the dual of global transfer from $\Mp(2n)$ to the endoscopic group  $H_{n',n''} = \SO(2n'+1) \times \SO(2n''+1)$, $S_{disc}^{H_{n',n''} }$ is the stable part of  the stable trace formula of $H_{n',n''}$ and 
\[  \iota(n',n'') = \begin{cases} 
1/4, & \text{  if $n'n'' \ne 0$;} \\
1/2, & \text{  otherwise.} \end{cases} \]
If this approach succeeds, one would obtain at every place $v$ the character relations for $\Mp(2n,F_v)$, many properties of representations of $\Mp(2n, F_v)$, and ultimately a multiplicity formula for non-tempered representations in the global setting. Note that many of the local properties have been studied in \cite{gs} using $\theta$-lifts. Arthur's analysis in \cite{Art1} ought to be easier for $\Mp(2n, \A)$ since
\begin{itemize}
	\item the stable side of the trace formula is known;
	\item the global parameters and the localizations are already defined --- they are exactly the parameters for $H$;
	\item one can resort to the multiplicity formula above in some local-global arguments, whenever necessary.
\end{itemize}
Nonetheless, to exploit the trace formula, we still need another ingredient which should be proved together with the multiplicity formula in a long, interlocked induction argument, namely the \emph{local intertwining relations}. This will be the subject of the next few paragraphs.

\subsection{Speculations on local intertwining relations}
The constructions below are modeled upon \cite[Chapter 2]{Art1} and we only give a sketch of the relevant ideas and the new features. Details will appear elsewhere.

Begin with the local setting. Take a local field $k$ of characteristic zero, fix $\psi: k \to \C^\times$ and consider the local metaplectic covering $\tilde{G} = \Mp(2n,k) \twoheadrightarrow G(k) := \Sp(2n,k)$. Fix a $k$-pinning $(B,T, \cdots)$ for $G$. In fact, we choose $(B,T)$ by choosing the standard symplectic basis of the $2n$-dimensional symplectic space defining $\Mp(2n,k)$.

Consider a proper standard parabolic $\tilde{P} = \tilde{M}U$ of $\tilde{G}$ and their dual $\tilde{M}^\vee \subset \tilde{P}^\vee \subset \tilde{G}^\vee$. Set $W(M) = W(\tilde{M}^\vee)$ to be $N_G(M)/M$; it acts algebraically on $M$.

\begin{defn}
	First of all, we lift every $w \in W(M)$ to a representative $\tilde{w} \in \tilde{G}$. The idea is to use the \emph{Springer section} (see \cite{spr}): we begin with the case of minimal $M$ for which $W(M)$ is the usual Weyl group $W^G_0$. For any simple root $\alpha$, as a first approximation we lift $w_\alpha$ to
	\[ x_\alpha(1) x_{-\alpha}(-1) x_\alpha(1) \quad \in \tilde{G} \]
	using the splitting over unipotent radicals, where $x_{\pm \alpha}: \mathbb{G}_a \to U_{\pm \alpha}$ is part of the chosen $k$-pinning. If $\alpha$ is a short root and $n > 1$, this furnishes $\tilde{w}_\alpha \in \tilde{G}$. If $\alpha$ is long, we modify the element above by a canonical factor from $\mu_8$ to get $\tilde{w}_\alpha$ such that $\tilde{w}_\alpha^2 = -1$. To be precise, we may safely work within $\Mp(2,k)$ and choose $\tilde{w}_\alpha$ to be the preimage of
	\[ x_\alpha(1) x_{-\alpha}(-1) x_\alpha(1) = \begin{pmatrix} 1 & 1 \\ 0 & 1 \end{pmatrix} \begin{pmatrix} 1 & 0 \\ -1 & 1 \end{pmatrix} \begin{pmatrix} 1 & 1 \\ 0 & 1 \end{pmatrix} = \begin{pmatrix} 0 & 1 \\ -1 & 0 \end{pmatrix} \]
	that acts in the Schr\"{o}dinger model as the unitary Fourier transform on the Schwartz-Bruhat space $\mathcal{S}(k)$. This is exactly the operator on $\mathcal{S}(k)$ denoted by $M_\ell\left[\bigl( \begin{smallmatrix} 0 & 1 \\ -1 & 0 \end{smallmatrix} \bigr)\right]$ in \cite[2.4.1]{wwli1}.

	In general, we use a reduced decomposition of $w \in W^G_0$ to lift it to $\tilde{G}$. If $M$ is not minimal and $w \in W(M)$, we take the representative of $w$ in $W^G_0$ that stabilizes the simple roots of $(B \cap M, T)$. The upshot is that the non-multiplicativity of $w \mapsto \tilde{w}$ is described by a $2$-cocycle similar to that in \cite{Art1}.
\end{defn}

\begin{prop}
	The definition above does not depend on the reduced decomposition of $w$.
\end{prop}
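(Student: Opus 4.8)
The plan is to reduce the well-definedness to the classical statement that the Springer/Tits section for the simply connected group $G = \Sp(2n)$ is independent of the reduced decomposition, and then to track the metaplectic correction factors separately. First I would recall the key classical input from \cite{spr}: if $w = s_{\alpha_1} \cdots s_{\alpha_\ell}$ is a reduced word in the simple reflections, then the product $\tilde{s}_{\alpha_1}(1) \cdots \tilde{s}_{\alpha_\ell}(1)$ in $G(k)$ — where $\tilde{s}_\alpha(1) = x_\alpha(1) x_{-\alpha}(-1) x_\alpha(1)$ — depends only on $w$, not on the chosen reduced word. This is proved by checking the braid relations: for any pair of simple roots $\alpha, \beta$ generating a rank-two subsystem, the two sides of the braid relation $\underbrace{s_\alpha s_\beta \cdots}_{m} = \underbrace{s_\beta s_\alpha \cdots}_{m}$ lift to the same element of $G(k)$, which one verifies by an $\SL_2$- or rank-two computation inside $G$. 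Since any two reduced words for $w$ are connected by a sequence of braid moves (Matsumoto's theorem / Tits' lemma), the section $w \mapsto \tilde{w}$ over $G(k)$ is well defined.

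Next I would lift this to $\tilde{G} = \Mp(2n,k)$. The point is that the metaplectic two-cocycle, in the explicit Schr\"{o}dinger cocycle of \cite{wwli1}, is a coboundary on unipotent elements, and the representatives $x_{\pm\alpha}(t)$ are taken through the canonical splitting over the unipotent radicals $U_{\pm\alpha}$. Hence the only ambiguity in comparing two reduced words for $w$ occurs in the $\mu_8$-fiber. For short roots (when $n>1$) the chosen lift $\tilde{w}_\alpha$ already has order matching the classical Springer element, so no correction enters; for the long root $\alpha$ one has inserted a canonical $\mu_8$-factor making $\tilde{w}_\alpha^2 = -1$, realized concretely as the Fourier transform $M_\ell[\bigl(\begin{smallmatrix} 0 & 1 \\ -1 & 0 \end{smallmatrix}\bigr)]$ inside $\Mp(2,k)$. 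I would then verify the braid relations \emph{in the covering}: each rank-two braid identity, which holds in $G(k)$ by the previous paragraph, lifts to an identity in $\tilde{G}$ up to an a priori element of $\mu_8$, and one computes that this element is in fact $1$. For rank-two subsystems of type $A_1 \times A_1$ this is immediate from the block structure. For type $A_2$ (only short roots) the splitting over unipotents already gives the identity. The essential case is type $B_2 = C_2$, involving one short and one long root; here I would carry out the computation inside (a suitable copy of) $\Mp(4,k)$ using the Schr\"{o}dinger model, exploiting that the Weil representation is a genuine \emph{representation} of $\Mp(4,k)$, so that the braid relation holds on the nose in $\mathrm{Sp}$ of the Weil representation and hence the $\mu_8$-discrepancy acts trivially, forcing it to be $1$.

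Finally, for non-minimal $M$ and $w \in W(M)$, I would observe that the chosen representative is the unique element of $W^G_0$ in the coset $wW_M$ of minimal length that stabilizes the simple roots of $(B\cap M, T)$, so its reduced decompositions are a subset of those considered above and the independence is inherited. One then records that the resulting failure of multiplicativity $\tilde{w}_1 \tilde{w}_2 = c(w_1,w_2)\, \widetilde{w_1 w_2}$ is governed by a $\mu_8$-valued $2$-cocycle $c$ of the same shape as in \cite[Chapter 2]{Art1}, which is the output needed for the local intertwining relations. The main obstacle I expect is the explicit $B_2$ braid verification in the metaplectic cover: keeping careful track of the $\mu_8$-factor through a product of Fourier transforms and Weil operators in the Schr\"{o}dinger model is delicate, and it is precisely where the ``metaplecticness'' (the order-$4$ behavior of long-root lifts versus order-$2$ for short roots) enters; all the other cases are essentially the classical Springer argument transported through the canonical unipotent splitting.
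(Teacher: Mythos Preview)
Your overall strategy is sound and would eventually succeed, but you are working much harder than necessary, and you miss the two observations that make the paper's proof essentially a one-liner.

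First, you propose to establish the braid relations in $G(k)$ classically and then lift them to $\tilde{G}$, computing the residual $\mu_8$-factor by hand. The paper instead invokes \cite[Proposition~11.4]{bd}, which already proves the braid relations
\[
\underbrace{\tilde{w}_\alpha \tilde{w}_\beta \cdots}_{m_{\alpha\beta}} = \underbrace{\tilde{w}_\beta \tilde{w}_\alpha \cdots}_{m_{\alpha\beta}}
\]
for the \emph{unmodified} Springer lifts $x_\alpha(1)x_{-\alpha}(-1)x_\alpha(1)$ inside an arbitrary covering group. So no case-by-case verification in $\tilde{G}$ is needed for the unmodified lifts; this is a known general fact.

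Second, and this is the point you miss entirely: the $\mu_8$-modification for the long root is multiplication by a fixed \emph{central} element $\zeta\in\mu_8$. In type $C_n$ the only braid relation involving the long root has $m_{\alpha\beta}=4$, so each side of the braid word contains the long-root lift exactly twice. Both sides therefore pick up the same factor $\zeta^2$, and the modified braid relation follows immediately from the unmodified one. There is no need for any Schr\"odinger-model computation in $\Mp(4,k)$.

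Your anticipated ``main obstacle'' --- the explicit $B_2$ verification via Fourier transforms --- is thus a computation you never have to do. Your argument ``the Weil representation is genuine, so the braid relation holds on the nose and the $\mu_8$-discrepancy is forced to be $1$'' is also not an argument as stated: genuineness tells you the discrepancy acts by a scalar, not that the scalar is $1$; you would still have to compute both operator products explicitly. That computation is avoidable once you notice the centrality-plus-counting trick.
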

\begin{proof}
	It suffices to consider the case $W(M) = W^G_0$. As in the case \cite[8.3.3 and 9.3.3]{spr} of reductive groups, the key ingredient is the braid-like relation
	\[ \underbrace{\tilde{w}_\alpha \tilde{w}_\beta \cdots}_{m_{\alpha\beta} \text{ terms}} =  \underbrace{\tilde{w}_\beta \tilde{w}_\alpha \cdots}_{m_{\alpha\beta} \text{ terms}}, \quad (m_{\alpha\beta})_{\alpha,\beta}: \text{the Coxeter matrix} \]
	that holds in $\tilde{G}$ for any simple roots $\alpha \neq \beta$. This issue has been addressed in \cite[Proposition 11.4]{bd} for coverings, but without introducing the modification by $\mu_8$ above. Fortunately, the modification of $\tilde{w}_\alpha$ for long $\alpha$ does not alter this relation, since $m_{\alpha\beta}=4$ when $\alpha \neq \beta$, thus the numbers of appearance of $\tilde{w}_\alpha$ on both sides are the same.
\end{proof}

\begin{rem}
	Let $\alpha$ be a long simple root. Although $\tilde{w}_\alpha$ does not coincide with $x_\alpha(1) x_{-\alpha}(-1) x_\alpha(1)$ in our construction, they have the same global product. Indeed, let $F$ be a number field, there exists a unique splitting $i: \Sp(2n, F) \to \Mp(2n, \mathbb{A}_F)$. For a parabolic subgroup $P$ with unipotent radical $U$, we contend that
	\[ i|_{U(F)} = (\prod_v s_v)\bigg|_{U(F)} \]
	where $s_v: U(F_v) \to \Mp(2n, F_v)$ is the canonical splitting over unipotent radicals. Both sides are $P(F)$-equivariant sections, hence the equality follows from the uniqueness part of \cite[Appendix I, Proposition (a)]{mw}.

	On the other hand, $i$ can be realized by Schr\"{o}dinger models (see \cite[Proposition 2.16]{wwli1}, for example.) Now we may assume $n=1$ and use Schr\"{o}dinger models to deduce that $i \left(\begin{smallmatrix} 0 & 1 \\ -1 & 0 \end{smallmatrix}\right)$ equals the product over all places $v$ of the unitary Fourier transform on $\mathcal{S}(F_v)$, that is, the product of $\tilde{w}_{\alpha, v}$. It follows that the product of $\tilde{w}_{\alpha,v}$ equals 
	\[ i \begin{pmatrix} 0 & 1 \\ -1 & 0 \end{pmatrix} = i \begin{pmatrix} 1 & 1 \\ 0 & 1 \end{pmatrix} i \begin{pmatrix} 1 & 0 \\ -1 & 1 \end{pmatrix} i \begin{pmatrix} 1 & 1 \\ 0 & 1 \end{pmatrix}. \]
	By applying the previous observation to $U = \bigl( \begin{smallmatrix} * & 1 \\ & * \end{smallmatrix} \bigr)$ and $\bigl( \begin{smallmatrix} * & \\ 1 & * \end{smallmatrix} \bigr)$, we see this equals the global version of $x_\alpha(1) x_{-\alpha}(-1) x_\alpha(1)$, as desired.
\end{rem}

Next, assume the local Langlands correspondence, character relations, etc. for $\tilde{M}$; this is actually an assumption on the $\Mp$-component of $\tilde{M} = \prod_i \GL(n_i) \times \Mp(2m, k)$, hence can be assured by induction on $n$. Denote by $\Phi_{2, \mathrm{bdd}}(\tilde{M})$ the set of discrete tempered $L$-parameters of $\tilde{M}$, say taken up to equivalence. Write $A_{\tilde{M}^\vee}$ for the maximal central torus of $\tilde{M}^\vee$.
\begin{itemize}
	\item Take $\phi_M$ to be a discrete tempered $L$-parameter for $\tilde{M}$ and $\pi_M \in \Pi_{\phi_M}$ --- this is a simplifying assumption, and one has to allow general cases.
	\item Let $\phi$ be the composite of $\phi_M$ with $\tilde{M}^\vee \hookrightarrow \tilde{G}^\vee$, which is a tempered parameter for $\tilde{G}$. We have $A_{\tilde{M}^\vee} \subset C_\phi$.
	\item Set $W_\phi$ to be the Weyl group of $A_{\tilde{M}^\vee}$ in $C_\phi$; that is, the group of automorphisms of $A_{\tilde{M}^\vee}$ induced from the adjoint action of $C_\phi$.
\end{itemize}

Let $w \in W_\phi$. Since $w$ is an automorphism of $A_{\tilde{M}^\vee}$, it normalizes $\tilde{M}^\vee$ thus can be viewed as an element of $W(M)$, and $\tilde{w} \in \tilde{G}$ is constructed as above. As in \cite[\S 2.3]{Art1}, the study of the full trace formula requires normalized intertwining operators
\begin{gather*}
	R_P(w, \pi_M, \phi) = \pi(w) \ell(w, \pi_M, \phi) R_{wPw^{-1}|P}(\pi_M, \phi)
\end{gather*}
that act as
\begin{gather*}
	I_{\tilde{P}}(\pi_M) \xrightarrow{R_{wPw^{-1}|P}(\pi_M, \phi)} I_{w\tilde{P}w^{-1}}(\pi_M) \xrightarrow{\ell(w, \pi_M, \phi)} I_{\tilde{P}}(w\pi_M) \xrightarrow{\pi(w)} I_{\tilde{P}}(\pi_M).
\end{gather*}
Their definitions are quite non-trivial. Let us make a quick sketch.
\begin{enumerate}
	\item $w\pi_M(\tilde{x}) = \pi_M(\tilde{w}^{-1} \tilde{x} \tilde{w})$ for all $\tilde{x} \in \tilde{M}$.
	\item $R_{wPw^{-1}|P}(\pi_M, \phi)$ is the normalized intertwining operator \`{a} la Arthur; in view of the induction hypothesis, the normalizing factors from $\mathrm{SO}(2n+1)$ work here.
	\item $\ell(w, \pi_M, \phi)$ is $\phi \mapsto \phi(\tilde{w}^{-1} \cdot)$ times some correction factor, again using that from $\SO(2n+1)$-case.
	\item Choose $\pi(w) \in \mathrm{Isom}_{\tilde{M}}(w\pi_M, \pi_M)$: it affects only the $\GL$-components of $\pi_M$. To get a well-defined operator, one may require $\pi(w)$ to be Whittaker-normalized.
\end{enumerate}
Unlike the case of classical groups, the resulting operator is neither multiplicative in $w$ nor Whittaker-normalized. Denote by $\gamma(w, \phi) \in \mathbb{C}^\times$ the ``expected effect'' of $R_P(w, \pi_M, \phi)$ on the space of Whittaker functionals. See for example \cite{bfh} or \cite[Theorem 4.3]{sz} for explicit formulas. It is essentially a product of local root numbers.

Take any $u \in N_\phi := N_{C_\phi}(A_{\tilde{M}^\vee})$ which is semisimple and maps to $w \in W_\phi$ under the natural homomorphism $N_\phi \to W_\phi$. Let $\mathfrak{N}_\phi := \pi_0(N_\phi, 1)$. By applying \cite[p.104]{Art1} or \cite[Pg. 103, after (2.4.1)]{kmsw} to $\SO(2n+1)$, the natural map $S_{\phi_M} \to \mathfrak{N}_\phi$ admits a canonical splitting
\[ \mathfrak{N}_\phi = S_{\phi_M} \times W_\phi. \]
Let $u \mapsto \underline{u} \in \mathfrak{N}_\phi$, and let $\underline{u}^\flat$ be the $S_{\phi_M}$-component of $\underline{u}$ relative to the splitting above. Choose any preimage $u^\flat \in C_{\phi_M, \mathrm{ss}}$ of $\underline{u}^\flat$. Also denote by $\phi^\flat$ (resp. $\pi_M^\flat$) the $\Mp$-component of the parameter $\phi_M$ (resp. the representation $\pi_M$). We want to define
\begin{align*}
	c(u, u^\flat, \phi) & := \tilde{\epsilon}_\phi(u) \tilde{\epsilon}_{\phi^\flat}(u^\flat)^{-1} \gamma(w,\phi)^{-1}, \\
	f(\phi,u) & := c(u, u^\flat, \phi) \sum_{\pi_M \in \Pi_{\phi_M}} \langle u^\flat, \pi_M^\flat \rangle \mathrm{tr}\left( R_P(w, \pi_M, \phi) I_{\tilde{P}}(w, f) \right)
\end{align*}
where $f$ is an anti-genuine test function on $\tilde{G}$. Although $c(u, u^\flat, \phi)$ depends on the choice of $u^\flat$, we have the following:
\begin{prop}
	The expression $f(\phi, u)$ is independent of the choice of $u^\flat$.
\end{prop}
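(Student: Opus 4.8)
The plan is to isolate how $f(\phi,u)$ depends on the chosen semisimple representative $u^\flat \in C_{\phi_M, \mathrm{ss}}$ and to check that this dependence cancels term by term in the sum over $\Pi_{\phi_M}$. Inspecting the definitions, $u^\flat$ enters $f(\phi,u)$ in only two places: through the scalar $\tilde{\epsilon}_{\phi^\flat}(u^\flat)^{-1}$ sitting inside $c(u,u^\flat,\phi)$, and through each coefficient $\langle u^\flat, \pi_M^\flat \rangle$ in the summation. Pulling the genuinely $u^\flat$-independent quantities out front, I would rewrite
\[ f(\phi,u) = \tilde{\epsilon}_\phi(u)\, \gamma(w,\phi)^{-1} \sum_{\pi_M \in \Pi_{\phi_M}} \Big[\, \tilde{\epsilon}_{\phi^\flat}(u^\flat)^{-1}\, \langle u^\flat, \pi_M^\flat \rangle \,\Big]\, \mathrm{tr}\big( R_P(w,\pi_M,\phi)\, I_{\tilde{P}}(w,f) \big), \]
so that it remains only to prove that the bracketed scalar depends on the fixed element $\underline{u}^\flat \in S_{\phi_M}$ alone and not on the representative $u^\flat$.

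This is exactly where Conjecture \ref{conj:coeff-refinement} should be invoked, applied to the $\Mp$-factor $\tilde{M}^\flat = \Mp(2m,k)$ of $\tilde{M} = \prod_i \GL(n_i,k) \times \Mp(2m,k)$. Since $P$ is a proper parabolic, $m < n$, so the running induction hypothesis --- the local Langlands correspondence and the refined character relations for $\tilde{M}$ --- makes it available, giving $\langle s, \pi_M^\flat \rangle = \langle s, \pi_M^\flat \rangle_\Theta\, \tilde{\epsilon}_{\phi^\flat}(s)$ for $s \in C_{\phi^\flat, \mathrm{ss}}$. As $\tilde{\epsilon}_{\phi^\flat}$ is $\{\pm 1\}$-valued, this collapses the bracketed scalar to $\langle u^\flat, \pi_M^\flat \rangle_\Theta$. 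Next I would use that $\langle \cdot, \pi_M^\flat \rangle_\Theta = \langle \cdot, \tau_M^\flat \rangle$ (with $\pi_M^\flat = \theta_{\psi}(\tau_M^\flat)$) is, by the very construction of the $\Theta$-pairing, the character of the component group $S_{\phi^\flat} = \pi_0(C_{\phi^\flat}, 1)$ attached to $\tau_M^\flat$ in the Vogan packet of $\SO(2m+1)$; hence its value at $u^\flat$ depends only on the image of $u^\flat$ in $\pi_0(C_{\phi^\flat},1)$. Finally, because the $\GL$-factors of $\tilde{M}^\vee$ contribute connected centralizers, one has $\pi_0(C_{\phi_M},1) = \pi_0(C_{\phi^\flat},1) = S_{\phi_M}$, and under this identification the image of $u^\flat$ is precisely the prescribed $\underline{u}^\flat$. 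So the bracketed scalar equals $\langle \underline{u}^\flat, \tau_M^\flat \rangle$, the proposition follows, and the same computation covers the general case of non-discrete $\phi_M$ and arbitrary $\pi_M$ flagged above.

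The conceptual point is that the correction factor $\tilde{\epsilon}_{\phi^\flat}(u^\flat)^{-1}$ was built into $c(u,u^\flat,\phi)$ precisely in order to trade the ``endoscopic'' coefficients $\langle u^\flat, \pi_M^\flat \rangle$ --- which, as the rank-one computation in \S 3.6 shows, need neither be homomorphisms nor factor through $S_{\phi^\flat}$ --- for the ``theta'' coefficients $\langle u^\flat, \pi_M^\flat \rangle_\Theta$, which do factor through $\pi_0$; once this trade is made the independence is automatic. Accordingly the only real input is Conjecture \ref{conj:coeff-refinement} for the smaller metaplectic group, and I expect the genuine obstacle to lie not in this proposition itself but in organizing the interlocked induction so that that conjecture --- together with the local Langlands correspondence and the local intertwining relations for $\tilde{M}$ --- is legitimately in hand at this point, which is exactly what this proposition is preparing for.
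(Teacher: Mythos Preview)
Your proof is correct and follows essentially the same route as the paper: isolate the $u^\flat$-dependence to the product $\tilde{\epsilon}_{\phi^\flat}(u^\flat)^{-1}\,\langle u^\flat, \pi_M^\flat\rangle$ and show this quotient factors through $S_{\phi_M}$ using the inductive hypothesis for $\tilde{M}$. The only difference is that you invoke the stronger Conjecture~\ref{conj:coeff-refinement} to identify the quotient explicitly as $\langle u^\flat,\pi_M^\flat\rangle_\Theta$, whereas the paper appeals only to Conjecture~\ref{conj:coeff}, which already asserts that $\langle\cdot,\pi_M^\flat\rangle\,\tilde{\epsilon}_{\phi^\flat}^{-1}\in\Irr(S_{\phi_M})$ without naming it; your version gives a more concrete picture, while the paper's is marginally more economical in its hypotheses.
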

\begin{proof}
	Let $u^\flat, v^\flat \in C_{\phi_M, \mathrm{ss}}$ be any two preimages of $\underline{u}^\flat$. It suffices to show that
	\[ \frac{ \langle u^\flat, \pi_M^\flat \rangle }{ \langle v^\flat, \pi_M^\flat \rangle } = \frac{ \tilde{\epsilon}_{\phi^\flat}(u^\flat) }{ \tilde{\epsilon}_{\phi^\flat}(v^\flat) }. \]
	This equality follows from the Conjecture \ref{conj:coeff} for $\tilde{M}$ since $u^\flat$, $v^\flat$ have the same image in $S_{\phi_M}$.
\end{proof}

Recall that we have defined $f^!(\phi, u)$ for all anti-genuine $C^\infty_c$ test function $f$ on $\tilde{G}$.
\begin{conj}[Special case of local intertwining relation]\label{conj:LIR}
	Given $\phi_M$ and $u$ as above, we expect that $f^!(\phi, u) = f(\phi, u)$ for all $f$.
\end{conj}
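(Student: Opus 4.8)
The plan is to establish Conjecture~\ref{conj:LIR} as part of Arthur's standard-model induction \cite[Chapter~4]{Art1}, carried out on the rank $n$, with the local intertwining relation and the multiplicity formula proved in tandem. A large part of the needed input is already in place for $\Mp(2n)$: the multiplicity formula of Theorem~\ref{T:B}, the stabilised elliptic trace formula of \cite{wwli3}, Arthur's classification for $\SO(2n+1)$ and its pure inner forms, and --- crucially --- Conjectures~\ref{conj:coeff} and~\ref{conj:coeff-refinement}, now established by Luo \cite{luo2} for tempered local $L$-packets, which reconcile the $\theta$-normalised packet data $\langle\cdot,\pi\rangle_\Theta$ with the endoscopic data $\langle\cdot,\pi\rangle$. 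The induction hypothesis provides the LLC, the character relations, and the full local intertwining relation for every proper Levi $\tilde{M} = \prod_i \GL(n_i)\times\Mp(2m)$, i.e. for its rank-$m$ metaplectic factor.

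The first step is a chain of local reductions. Decomposing $W_\phi$ along the block structure of $C_\phi$, the ``permutation'' generators (coming from isomorphic $\GL$-summands of $\phi_M$) act only on $\GL$-components of $\pi_M$, where $R_P(w,\pi_M,\phi)$ is pinned down by the Rankin--Selberg theory of Jacquet--Shalika and the Whittaker normalisation of the $\GL$-part exactly as in \cite[\S\S 2.3--2.4]{Art1}; these yield nothing new. Parabolic descent reduces the corank, so one may assume $\tilde{M}=\GL(n_0)\times\Mp(2m)$ and $w$ the single ``sign-change''. Since the coefficients $\langle u^\flat,\pi_M^\flat\rangle$ together with the known structure of $\Pi_{\phi_M}$ determine $R_P(w,\pi_M,\phi)$ on every member of the packet once it is pinned down on the generic one, one may take $\pi_M$ generic. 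At this stage the main arithmetic input enters: one uses the explicit formula for the normalising factor $\gamma(w,\phi)$ --- in the corank-one case a product of Langlands--Shahidi local factors attached to $\phi_{\GL}$ and $\phi^\flat$, twisted by the metaplectic correction built from the Weil index of $\psi$ (compare \cite{bfh}, \cite[Theorem~4.3]{sz}) --- and checks that the correction factor $c(u,u^\flat,\phi)=\tilde{\epsilon}_\phi(u)\tilde{\epsilon}_{\phi^\flat}(u^\flat)^{-1}\gamma(w,\phi)^{-1}$ is precisely what converts the $\theta$-normalisation $\langle\cdot,\pi_M^\flat\rangle$ appearing in $f(\phi,u)$ into the endoscopic normalisation $\langle\cdot,\pi_M\rangle$ implicit in $f^!(\phi,u)$; here Conjecture~\ref{conj:coeff-refinement} for $\tilde{M}$ is used.

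With the bookkeeping settled, the generic corank-one case can be transported from $\SO(2n+1)$: using Kudla's computation of the Jacquet modules of Weil representations and the compatibility of the theta correspondence with parabolic induction exploited in \cite{GI}, one relates $R_P(w,\pi_M,\phi)$ to the corresponding normalised intertwining operator for $\SO(V_n)$ up to a product of local root numbers, so that the metaplectic intertwining relation follows from the known one for $\SO(2n+1)$ and its pure inner forms once these factors are reconciled with the previous step. For the residual cases where this local comparison is not directly available --- typically $k$ archimedean with $I_{\tilde{P}}(\pi_M)$ reducible --- I would globalise: choose a number field $F$ and a place $v_0$ with $F_{v_0}=k$, and a global tempered parameter $\Psi$ with $\Psi_{v_0}=\phi$ and generic, tamely ramified localisations elsewhere, so that the local intertwining relation already holds at all $v\neq v_0$. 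Substituting into the stable trace formula $I_{disc}^{\Mp(2n)}=\sum_{n'+n''=n}\iota(n',n'')\,\mathcal{T}_{(n',n'')}(S_{disc}^{H_{n',n''}})$, isolating the $\Psi$-component on both sides by the usual descent, and using that the $\Psi$-contribution to $\tilde{\mathcal{A}}_{disc}$ is governed by Theorem~\ref{T:B}, the only remaining unknown is the local factor at $v_0$, which is thereby forced to satisfy $f^!(\phi,u)=f(\phi,u)$. Since the full stable trace formula for $\Mp(2n)$ is still conjectural, one must in fact bootstrap from the stabilised elliptic part \cite{wwli3}, handling the parabolic terms by hand inside the induction on $n$.

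The hard part will be the exact matching in the middle step: computing $\gamma(w,\phi)$ and the $2$-cocycle governing the non-multiplicativity of the Springer-type lift $w\mapsto\tilde{w}$ on the nose, and showing that together with $\tilde{\epsilon}_\phi$ (and the central sign of Corollary~\ref{cor:s-vs-minus}) they reproduce the endoscopic side. Unlike the classical setting, there is no Whittaker normalisation available to absorb errors --- the operators $R_P(w,\pi_M,\phi)$ are, as noted, neither multiplicative in $w$ nor Whittaker-normalised --- so these root-number corrections really have to be evaluated explicitly. A secondary obstacle is the absence of the full stable trace formula for $\Mp(2n)$: the globalisation step must rely only on the stabilised elliptic part, which shifts part of the burden onto a more delicate induction on the rank.
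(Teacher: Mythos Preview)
The statement you are addressing is labeled a \emph{Conjecture} in the paper, and the paper does not prove it. What follows the conjecture in the text is a remark giving heuristics (checking that the coefficients of the generic constituent match on both sides), together with a list of desiderata --- a product formula for the constants $c(u,u^\flat,\phi)$, analogues of Arthur's sign lemmas, the spherical fundamental lemma, and ultimately the full stable trace formula for $\tilde{G}$ --- that would be needed to feed the statement into the standard model. The paper explicitly says ``Details will appear elsewhere'' and notes that only the elliptic terms of the trace formula have been stabilised. So there is no proof in the paper to compare your proposal against.

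Your proposal is not a proof but a program, and you seem aware of this: you flag that the full stable trace formula is conjectural and that the globalisation step must bootstrap from the elliptic part, and you describe the root-number matching as ``the hard part.'' That program is broadly in line with what the paper itself sketches as the intended route. But several of your steps are not yet arguments: the claim that the generic corank-one case ``can be transported from $\SO(2n+1)$'' via theta correspondence presupposes exactly the compatibility of intertwining operators under $\theta_\psi$ that is at issue; the reduction to the generic member by asserting that $\langle u^\flat,\pi_M^\flat\rangle$ ``determines $R_P(w,\pi_M,\phi)$ on every member of the packet once it is pinned down on the generic one'' is not justified (the $R$-group structure has to be established, not assumed); and your globalisation argument invokes ``isolating the $\Psi$-component on both sides by the usual descent,'' which in Arthur's work already requires the local intertwining relation at almost all places as part of the same interlocked induction --- you cannot simply import Theorem~\ref{T:B} as an independent input without checking that it supplies what the standard model needs at that stage. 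In short, what you have written is a plausible outline consistent with the paper's own expectations, but it does not close the gap the paper leaves open.
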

Granting the local Langlands correspondence and character relations for proper Levi subgroups, this will give the coefficients $\langle \cdot, \pi \rangle$ for non-discrete tempered $L$-packets for $\tilde{G}$ and much more, such as information on Knapp--Stein $R$-groups. We refer to \cite[Chapter 2]{Art1} for a full explanation.

\begin{rem}
	Here are some heuristics behind the Conjecture \ref{conj:LIR}. Suppose that
	\begin{itemize}
		\item $\tau_M$ is a generic discrete series of a Levi subgroup $M_\SO = \prod_i \GL(n_i) \times \SO(2m+1)$ of $\SO(2n+1)$;
		\item $\pi_M \in \widetilde{\Irr}(\tilde{G})$ is obtained from $\tau_M$ by applying $\theta$-lift to the $\SO$-component of $\tau_M$, which is $\psi$-generic (see \cite[\S 9]{gs} for relevant notions);
		\item take $\tau \in \Irr(\SO(2n+1))$ to be the generic summand of the normalized parabolic induction of $\tau_M$, and take $\pi$ to be the $\theta$-lift of $\tau$.
	\end{itemize}
	Take $\phi_M$ to be the $L$-parameter of $\pi_M$. Again, by the results furnished by \cite{gs}, we know $\pi$ is a $\psi$-generic summand of the normalized parabolic induction of $\pi_M$. Granting the Conjecture \ref{conj:coeff-refinement}, the coefficient of $f(\pi)$ in $f^!(\phi, u)$ should be
	\[ \langle u, \pi\rangle =  \langle u, \pi\rangle_\Theta \tilde{\epsilon}_\phi(u) = \langle u, \tau\rangle \tilde{\epsilon}_\phi(u) = \tilde{\epsilon}_\phi(u) \]
	by the Whittaker-normalization for $\SO(2n+1)$. On the other hand, the operator $\gamma(w, \phi)^{-1} R_P(w, \pi_M, \phi)$ acts trivially on the space of $\psi$-Whittaker functionals of $\mathcal{I}_{\tilde{P}}(\pi_M)$. Therefore the coefficient of $f(\pi)$ in $f(\phi, u)$ equals
	\[ \tilde{\epsilon}_\phi(u) \cdot \frac{\langle u^\flat, \pi_M \rangle}{ \tilde{\epsilon}_{\phi^\flat}(u^\flat) } = \tilde{\epsilon}_\phi(u) \cdot \langle u^\flat, \pi_M \rangle_\Theta \]
	which reduces to $\tilde{\epsilon}_\phi(u) \langle u^\flat, \tau_M \rangle = \tilde{\epsilon}_\phi(u)$, as what one can expect from $f^!(\phi, u) = f(\phi, u)$.
\end{rem}

The local intertwining relation is subject to global constraints.
\begin{enumerate}
	\item First of all, we have to allow non-discrete $\phi_M$, or even $A$-parameters for global applications.
	\item Suppose $\phi_M$ is a global parameter and $u, u^\flat$ are chosen globally. The constants should satisfy a product formula $\prod_v c(u, u^\flat, \phi)_v = 1$.
	\item The analogues of the \emph{sign lemmas} in \cite[Chapter 4]{Art1} must hold. They are the Lemmas 4.3.1 and 4.4.1 of \textit{loc. cit.}, which are related to the operators $R_P(w, \pi_M, \phi)$ and constructions on the dual groups, respectively. This is what motivates our definition of $R_P(w, \pi_M, \phi)$.
	\item Ultimately, we want to feed these objects into the \emph{standard model} of \textit{loc. cit.}, and deduce all the theorems inductively.
\end{enumerate}

To achieve all these goals, we will need the fundamental lemma for the spherical Hecke algebra of $\tilde{G}$, and this has recently been shown by
 Caihua Luo \cite{luo1}.  Moreover, we will probably need the stabilization of the full trace formula for $\tilde{G}$. Thus far, only the elliptic terms are stabilized: this is the main result of \cite{wwli3}.

\vskip 10pt

\end{document}